\RequirePackage{fix-cm}
\documentclass [smallextended,numbook,final]{svjour3}
\smartqed

\usepackage{mathptmx}
\usepackage{siunitx}
\usepackage{hyperref}
\usepackage{placeins}
\usepackage{tikz}
\usepackage{pgfplots}
\pgfplotsset{compat=1.16}
\usepackage{float}
\usepackage{amsmath,amsfonts,amssymb}
\usepackage{booktabs,multirow,multicol,arydshln}
\usepackage{comment}
\usepackage[numbers,square,comma]{natbib}
\bibliographystyle{spbasic}
\usepackage{lscape}
\usepackage{adjustbox}

\usepackage{mleftright}

\usepackage[noabbrev,capitalise]{cleveref}
\usepackage{numprint}
\usepackage{empheq}
\usepackage{xfrac}
\usepackage{stmaryrd}
\newcommand{\tsum}[1]{{\textstyle \sum_{#1}}}
\usepackage[small]{titlesec}
\usepackage{xurl}

\usetikzlibrary{external}
\tikzexternalize
\pgfplotsset{select coords between index/.style 2 args={
    x filter/.code={
        \ifnum\coordindex<#1\fi
        \ifnum\coordindex>#2\fi
    }
}}

\newcommand{\pheq}{\hphantom{{}={}}}

\DeclareMathOperator{\arrow}{{Arw}}
\DeclareMathOperator{\diag}{diag}
\DeclareMathOperator{\Diag}{Diag}

\DeclareMathOperator{\logdet}{logdet}
\DeclareMathOperator{\sdim}{sd}
\DeclareMathOperator{\SOS}{SOS}
\DeclareMathOperator{\WSOS}{WSOS}
\DeclareMathOperator{\SOSpsd}{SOSPSD}
\DeclareMathOperator{\WSOSpsd}{WSOSPSD}

\DeclareMathOperator{\cl}{cl}
\DeclareMathOperator{\intr}{int}
\newcommand{\iin}[1]{\llbracket #1 \rrbracket}

\newcommand{\bff}{\mathbf{f}}

\newcommand{\bfg}{\mathbf{g}}
\newcommand{\bfG}{\mathbf{G}}
\newcommand{\bfH}{\mathbf{H}}
\newcommand{\bfh}{\mathbf{h}}
\newcommand{\bfx}{\mathbf{x}}
\newcommand{\bfX}{\mathbf{X}}
\newcommand{\bfY}{\mathbf{Y}}
\newcommand{\bfy}{\mathbf{y}}
\newcommand{\bfs}{\mathbf{s}}
\newcommand{\bft}{\mathbf{t}}
\newcommand{\bfT}{\mathbf{T}}
\newcommand{\bfu}{\mathbf{u}}
\newcommand{\bfv}{\mathbf{v}}
\newcommand{\bfw}{\mathbf{w}}
\newcommand{\bfS}{\mathbf{S}}
\newcommand{\bfPi}{\tens{\Pi}}

\newcommand{\bfV}{\mathbf{V}}

\newcommand{\bfP}{\mathbf{P}}
\newcommand{\bfp}{\mathbf{p}}
\newcommand{\bfQ}{\mathbf{Q}}
\newcommand{\bfq}{\mathbf{q}}
\newcommand{\bbR}{\mathbb{R}}
\newcommand{\bbS}{\mathbb{S}}

\newcommand{\bbN}{\mathbb{N}}
\newcommand{\cO}{\mathcal{O}}
\newcommand{\K}{{K}}
\newcommand{\Ksos}{{K}_{\SOS}}
\newcommand{\Ksospsd}{{K}_{\SOSpsd}}
\newcommand{\Ksosso}{{K}_{\SOS \ell_2}}
\newcommand{\Ksoslo}{{K}_{\SOS \ell_1}}

\newcommand{\Kwsos}{{K}_{\WSOS}}
\newcommand{\Kwsospsd}{{K}_{\WSOSpsd}}
\newcommand{\Kwsosso}{{K}_{\WSOS \ell_2}}
\newcommand{\Kwsoslo}{{K}_{\WSOS \ell_1}}
\newcommand{\lift}{\tens{\Lambda}}
\newcommand{\lam}{\tens{\Lambda}_{\SOS}}
\newcommand{\lampsd}{\tens{\Lambda}_{\SOSpsd}}
\newcommand{\lamso}{\tens{\Lambda}_{\SOS \ell_2}}

\begin{document}


\title{Sum of squares generalizations for conic sets\thanks{
The authors would like to thank the anonymous reviewers for their helpful comments and suggestions.
This work has been partially funded by the National Science Foundation under grant OAC-1835443 and the Office of Naval Research under grant N00014-18-1-2079.}}
\author{Lea Kapelevich \and Chris Coey \and Juan Pablo Vielma}
\date{\today}

\institute{Lea Kapelevich \at Operations Research Center, MIT \\
\email{lkap@mit.edu}
\and
Chris Coey \at  Operations Research Center, MIT \\
\email{coey@mit.edu}
\and
Juan Pablo Vielma \at Google Research and \\Sloan School of Management, MIT \\
\email{jvielma@google.com}, \email{jvielma@mit.edu}
}

\maketitle

This preprint has not undergone all peer review and post-submission improvements or corrections. 
The Version of Record of this article is published in Mathematical Programming, and is available online at \url{https://doi.org/10.1007/s10107-022-01831-6}.
\\

\begin{abstract}
In polynomial optimization problems, nonnegativity constraints are typically handled using the \emph{sum of squares} condition.
This can be efficiently enforced using semidefinite programming formulations, or as more recently proposed by \citet{papp2019sum}, using the sum of squares cone directly in a nonsymmetric interior point algorithm.
Beyond nonnegativity, more complicated polynomial constraints (in particular, generalizations of the positive semidefinite, second order and $\ell_1$-norm cones) can also be modeled through structured sum of squares programs.
We take a different approach and propose using more specialized polynomial cones instead.
This can result in lower dimensional formulations, more efficient oracles for interior point methods, or self-concordant barriers with smaller parameters.
In most cases, these algorithmic advantages also translate to faster solve times in practice.
\end{abstract}

\keywords{Polynomial optimization \and Sum of squares \and Interior point \and Non-symmetric conic optimization}
\subclass{
90-08 
\and
90C25  	
\and
90C51  
}

\section{Introduction}
\label{sec:introduction}

The \emph{sum of squares} (\emph{SOS}) condition is commonly used as a tractable restriction of polynomial nonnegativity.
While SOS programs have traditionally been formulated and solved using semidefinite programming (SDP), \citet{papp2019sum} recently demonstrated the effectiveness of a nonsymmetric interior point algorithm in solving SOS programs without SDP formulations.
%
%
%
In this note, we focus on structured SOS constraints that can be modeled using more specialized cones.
We describe and give barrier functions for three related cones useful for modeling functions of dense polynomials, which we hope will become useful modeling primitives.


The first is the cone of \emph{SOS matrices}, which was described by \citet[Section 5.7]{coey2021solving} without derivation.
We show that this cone can be computationally favorable to equally low-dimensional SOS formulations.
Characterizations of univariate SOS matrix cones in the context of optimization algorithms have previously been given by \citet[Section 6]{genin2003optimization}.
However, their use of monomial or Chebyshev bases complicates computations of oracles in an interior point algorithm \citep[Section 3.1]{papp2019sum} and prevents effective generalizations to the multivariate case.

The second is an \emph{SOS $\ell_2$-norm} (\emph{SOS-L2}) cone, which can be used to certify pointwise membership in the second order cone for a vector with polynomial components.
The third is an \emph{SOS $\ell_1$-norm} (\emph{SOS-L1}) cone, which can be used to certify pointwise membership in the epigraph set of the $\ell_1$-norm function.
Although it is straightforward to use SOS representations to approximate these sets, such formulations introduce cones of higher dimension than the constrained polynomial vector.
We believe we are first to describe how to handle these sets in an interior point algorithm without introducing auxiliary conic variables or constraints.
We suggest new barriers, with lower barrier parameters than SOS formulations allow. 




In the remainder of this section we provide background on SOS polynomials and implementation details of interior point algorithms that are required for later sections.
In \cref{sec:nonlinear} we describe the constraints we wish to model using each new cone, and suggest alternative SOS formulations for comparison. 
In \cref{sec:algebras} we outline how ideas introduced by \citet{papp2013semidefinite} can be used to characterize the cone of SOS matrices and the SOS-L2 cone.
\cref{sec:barriers} is focused on improving the parameter of the barriers for the SOS-L2 and SOS-L1 cones.
In \cref{sec:implementation} we outline implementation advantages of the new cones.
In \cref{sec:experiments} we compare various formulations using a numerical example and conclude in \cref{sec:conclusions}.

In what follows, we use $\bbS^m$, $\bbS_{+}^m$, and $\bbS_{++}^m$ to represent the symmetric, positive semidefinite and positive definite matrices respectively with side dimension $m$.
For sets, $\cl$ denotes the closure and $\intr$ denotes the interior.
$\iin{a..b}$ are the integers in the interval $[a, b]$.
$\vert A \vert$ denotes the dimension of a set $A$, and $\sdim(m) = \vert \bbS^m \vert = \sfrac{m (m+1)}{2}$.
We use $\langle \cdot, \cdot \rangle_{A}$ for the inner product on $A$.
For a linear operator $M: A \to B$, the adjoint $M^\ast: B \to A$ is the unique operator satisfying  $\langle x, M y \rangle_A = \langle y, M^\ast x \rangle_B$ for all $x \in A$ and $y \in B$.
$\mathbf{I}_m$ is the identity in $\bbR^{m \times m}$.
$\otimes_K: \bbR^{a_1 \times a_2} \times \bbR^{b_1 \times b_2} \to \bbR^{a_1 b_1 \times a_2 b_2}$ is the usual Kronecker product.
$\diag$ returns the diagonal elements of a matrix and $\Diag$ maps a vector to a matrix with the vector on the diagonal.
All vectors, matrices, and higher order tensors are written in bold font.
$s_i$ is the $i$th element of a vector $\bfs$ and $\bfs_{i \in \iin{1..N}}$ is the set $\{ \bfs_1, \ldots, \bfs_N \}$.
If $a, b, c, d$ are scalars, vectors, or matrices, then we use square brackets, e.g. $\begin{bsmallmatrix} a & b \\ c & d \end{bsmallmatrix}$, to denote concatenation into a matrix or vector, or round parentheses, e.g. $(a, b, c, d)$, for a general Cartesian product.
If $A$ is a vector space then $A^n$ is the Cartesian product of $n$ spaces $A$.

$\mathbb{R}[\bfx]_{n,d}$ is the ring of polynomials in the variables $\bfx = (x_1, \ldots, x_n)$ with maximum degree $d$.
Following the notation of \citet{papp2019sum}, we use $L = \binom{n+d}{n}$ and $U = \binom{n+2d}{n}$ to denote the dimensions of $\bbR[\bfx]_{n,d}$ and $\bbR[\bfx]_{n,2d}$ respectively, when $n$ and $d$ are given in the surrounding context.


\subsection{The SOS polynomials cone and generic interior point algorithms}
\label{sec:introduction:sospolynomials}

A polynomial $p(\bfx) \in \mathbb{R}[\bfx]_{n,2d}$ is SOS if it can be expressed in the form $p(\bfx) = \sum_{i \in \iin{1..N}} q_i(\bfx)^2$ for some $N \in \bbN$ and $q_{i \in \iin{1..N}}(\bfx) \in \mathbb{R}[\bfx]_{n,d}$.
We denote the set of SOS polynomials in $\mathbb{R}[\bfx]_{n,2d}$ by $\Ksos$, which is a proper cone in $\mathbb{R}[\bfx]_{n,2d}$ \citep{nesterov2000squared}.

We also say that $\bfs \in \Ksos$ for $\bfs \in \bbR^U$ if $\bfs$ represents a vector of coefficients of an SOS polynomial under a given basis.
We use such \emph{vectorized} definitions interchangeably with functional definitions of polynomial cones.
To construct a vectorized definition for $\Ksos$, suppose we have a fixed basis for $\bbR[\bfx]_{n,2d}$, and let $p_{i \in \iin{1..L}}(\bfx)$ be basis polynomials for $\bbR[\bfx]_{n,d}$.
Let $\lambda: \iin{1..L}^2 \to \mathbb{R}^U$ be a function such that $\lambda(i,j)$ returns the vector of coefficients of the polynomial $p_i(\bfx) p_j(\bfx)$ using the fixed basis for $\mathbb{R}[\bfx]_{n,2d}$.
Define the \emph{lifting operator} $\lift: \mathbb{R}^U \to \mathbb{S}^L$, introduced by \citet{nesterov2000squared}, as:
\begin{align}
    \lift(\mathbf{s})_{i,j} = \langle \lambda(i,j), \mathbf{s} \rangle_{\mathbb{R}^U} \quad \forall i,j \in \iin{1..L},
    \label{eq:general lambda}
\end{align}
where $\lift(\mathbf{s})_{ij}$ is a component in row $i$ and column $j$. 
Now the cones $\Ksos$ and $\Ksos^\ast$ admit the characterization \citep[Theorem 7.1]{nesterov2000squared}:
\begin{subequations}
\begin{align}
    \Ksos &= \lbrace \mathbf{s} \in \mathbb{R}^U:  \exists \mathbf{S} \in \bbS_{+}^L, \mathbf{s} = \lift^\ast(\mathbf{S}) \rbrace, \\
    \Ksos^\ast &= \lbrace \mathbf{s} \in \mathbb{R}^U: \lift(\mathbf{s}) \in \bbS_{+}^L \rbrace.
\end{align}
\label{eq:characterization}
\end{subequations}
\cref{eq:characterization} shows that the dual cone $\Ksos^{\ast}$ is an inverse linear image of the positive semidefinite (PSD) cone, and therefore has an efficiently computable \emph{logarithmically homogeneous self-concordant barrier} (LHSCB)
(see \citep[Definitions 2.3.1, 2.3.2]{nesterov1994interior}).
In particular, by linearity of $\lift$, the function $\bfs \mapsto -\logdet (\lift(\mathbf{s}))$ is an LHSCB for $\Ksos^\ast$ \citep[Proposition 5.1.1]{nesterov1994interior} with parameter $L$ (an $L$-LHSCB for short). 
This makes it possible to solve optimization problems over $\Ksos$ or $\Ksos^{\ast}$ with a \emph{generic} primal-dual interior point algorithm in polynomial time \citep{skajaa2015homogeneous}.%
\footnote{
We direct the interested reader to \citet{faybusovich2002self}, who obtained non-linear barriers for the cone of univariate polynomials generated by Chebyshev systems by computing the universal volume barrier of \citet{nesterov1994interior}, which is unrelated to the SDP representations of these polynomials.
}

In a generic primal-dual interior point algorithm, very few oracles are needed for each cone in the optimization problem.
For example, the algorithm described by \citet{coey2021solving} only requires a membership check, an initial interior point, and evaluations of derivatives of an LHSCB for each cone \emph{or} its dual.
Therefore, there is no particular advantage to favoring either $\Ksos$ or $\Ksos^{\ast}$ formulations.
Optimizing over $\Ksos$ (or $\Ksos^\ast$) directly instead of building SDP formulations is appealing because the dimension of $\Ksos$ is generally much smaller than the cone dimension in SDP formulations that are amenable to more specialized algorithms \citep{papp2019sum,coey2021solving}.
In later sections we describe efficient LHSCBs and membership checks for each cone we introduce.

The output of the lifting operator depends on the polynomial basis chosen for $\mathbb{R}[\bfx]_{n,d}$ as well as the basis for $\mathbb{R}[\bfx]_{n,2d}$. 
Following \citet{papp2019sum}, we use a set of Lagrange polynomials that are interpolant on some points $\mathbf{t}_{i \in \iin{1..U}}$ as the basis for $\mathbb{R}_{n,2d}[\bfx]$ and the multivariate Chebyshev polynomials \citep{hoffman1988generalized} as the basis in $\mathbb{R}_{n,d}[\bfx]$.
These choices give the particular lifting operator we implement, $\lam(\bfs)$:
\begin{align}
    \lam(\mathbf{s})_{i,j} = \tsum{u \in \iin{1..U}} p_i(\mathbf{t}_u)p_j(\mathbf{t}_u)s_u
    &
    \quad \forall i, j \in \iin{1..L}
    .
    \label{eq:scalarlambda}
\end{align}
Equivalently,
$\lam(\mathbf{s}) = \mathbf{P}^\top \Diag(\mathbf{s}) \mathbf{P}$,
where
$P_{u, \ell} = p_\ell(\mathbf{t}_u)$ for all $u \in \iin{1..U}, \ell \in \iin{1..L}$.
The adjoint $\lam^\ast: \mathbb{S}^L \to  \mathbb{R}^U$ is given by $\lam^\ast(\bfS) = \diag(\mathbf{P} \bfS \mathbf{P}^\top)$.
\citet{papp2019sum} show that the Lagrange basis gives rise to expressions for the gradient and Hessian of the barrier for $\Ksos^\ast$ that are computable in $\mathcal{O}(LU^2)$ time for any $d, n \geq 1$.
Although we assume for simplicity that $p$ is a dense basis for $\bbR[\bfx]_{n,d}$, this is without loss of generality.
A modeler with access to a suitable sparse basis of $\bar{L} < L$ polynomials in $\bbR[\bfx]_{n,d}$ and $\bar{U} < U$ interpolation points, could use \cref{{eq:scalarlambda}} and obtain a barrier with parameter $\bar{L}$.

\section{Polynomial generalizations for three conic sets}
\label{sec:nonlinear}

The first set we consider are the polynomial matrices $\bfQ(\bfx) \in \mathbb{R}[\bfx]_{n,2d}^{m \times m}$ (i.e. $m \times m$ matrices with components that are polynomials in $n$ variables of maximum degree $2d$)%
\footnote{
We assume that polynomial components in vectors and matrices involve the same variables and have the same maximum degree, to avoid detracting from the key ideas in this paper.
This assumption could be removed at the expense of more cumbersome notation.
} satisfying the constraint:
\begin{align}
    \bfQ(\bfx) \succeq 0 \quad \forall \bfx.
    \label{eq:polypsd}
\end{align}
One of the first applications of matrix SOS constraints was by \citet{henrion2006convergent}. 
The moment-SOS hierarchy was extended from the scalar case to the matrix case, using a suitable extension of Putinar's Positivstellesatz studied by \citet{hol2004sum} and \citet{kojima2003sums}.
This constraint has various applications in statistics, control, and engineering  \citep[][]{aylward2007explicit,aylward2008stability,doherty2004complete,hall2019engineering}. 
A tractable restriction for \cref{eq:polypsd} is given by the SOS formulation:
 \begin{align}
    \bfy^\top \bfQ(\bfx) \bfy \in \Ksos \quad \forall {\mathbf{y}} \in \mathbb{R}^m.
    \label{eq:scalarsospsd}
\end{align}
This formulation is sometimes implemented in practice (e.g. \citep{legat2017sum}) and requires an SOS cone of dimension $U\sdim(m)$ (by exploiting the fact that all terms are bilinear in the $\bfy$ variables). 
It is well known that \cref{eq:scalarsospsd} is equivalent to restricting  $\bfQ(\bfx)$ to be an \emph{SOS matrix} of the form  $\bfQ(\bfx) = \mathbf{M}(\bfx)^\top \mathbf{M}(\bfx)$ for some $N \in \bbN$ and $\mathbf{M}(\bfx) \in \bbR[\bfx]_{n,d}^{N \times m}$ \citep[Definition 3.76]{blekherman2012semidefinite}. To be consistent in terminology with the other cones we introduce, we refer to SOS matrices as \emph{SOS-PSD} matrices, or belonging to $\Ksospsd$.
We show how to characterize $\Ksospsd$ and use it directly in an interior point algorithm in \cref{sec:algebras}.

The second set we consider are the polynomial vectors $\mathbf{q(x)} \in \mathbb{R}[\bfx]_{n,2d}^m$ satisfying:
\begin{align}
{q}_1(\bfx) \geq \sqrt{\tsum{i \in \iin{2..m}} ( q_i(\bfx) )^2 } \quad \forall \bfx,
\label{eq:polysoc}
\end{align}
and hence requiring $\mathbf{q(x)}$ to be in the epigraph set of the $\ell_2$-norm function (second order cone) pointwise (cf. \cref{eq:polypsd} requiring the polynomial matrix to be in the PSD cone). 
A tractable restriction for this constraint is given by the SOS formulation:
\begin{align}
    \label{eq:arrowpsd}
    \bfy^\top \arrow(\mathbf{q(\bfx)}) \bfy \in \Ksos \quad \forall {\mathbf{y}} \in \mathbb{R}^m,
\end{align}
where $\arrow: \bbR[\bfx]_{n,2d}^m \to \bbR[\bfx]_{n,2d}^{m \times m}$ is defined by:
\begin{align}
\begin{split}
    \arrow(\bfp(\bfx)) = 
    \begin{bmatrix}
    p_1(\bfx) & \bar{\bfp}(\bfx)^\top \\
    \bar{\bfp}(\bfx) & p_1(\bfx) \mathbf{I}_{m-1}
    \end{bmatrix}, 
    \\
    \bfp(\bfx) = 
    ( p_1(\bfx), \bar{\bfp}(\bfx) )
    \in \bbR[\bfx]_{n,2d} \times \bbR[\bfx]_{n,2d}^{m-1}
    .
\end{split}
\end{align}
Due to the equivalence between \cref{eq:scalarsospsd} and membership in $\Ksospsd$, \cref{eq:arrowpsd} is equivalent to requiring that $\bfq(\bfx)$ belongs to the cone we denote $\K_{\arrow \SOSpsd}$ defined by:
\begin{align}
    \label{eq:arrowpsdpsd}
    \K_{\arrow \SOSpsd} =
    \{
    \bfq(\bfx) \in \bbR[\bfx]_{n,2d}^m:
    \arrow({\bfq(\bfx)}) \in \Ksospsd
    \}
    .
\end{align}
Membership in $\K_{\arrow \SOSpsd}$ ensures \cref{eq:polysoc} holds due to the SDP representation of the second order cone \citep{alizadeh2003second}, and the fact that the SOS-PSD condition certifies pointwise positive semidefiniteness. 
An alternative restriction of \cref{eq:polysoc} is described by the set we denote $\Ksosso$, which is not representable by the usual scalar polynomial SOS cone in general:
\begin{align}
    \Ksosso = \left\{ 
    \begin{aligned}
    \bfq(\bfx) \in \bbR[\bfx]_{n,2d}^m : 
    \exists N \in \bbN, \bfp_{i \in \iin{1..N}}(\bfx) \in \bbR[\bfx]_{n,d}^m,
    \\
    \bfq(\bfx) = \tsum{i \in \iin{1..N}} \bfp_i(\bfx) \circ \bfp_i(\bfx)
    \end{aligned}
    \right\},
    \label{eq:functionalksosso}
\end{align}
where $\circ: \bbR^m \times \bbR^m \to \bbR^m$ is defined by:
\begin{align}
    \bfx \circ \mathbf{y} =
    \begin{bmatrix}
    \bfx^\top \mathbf{y} \\
    x_1 \bar{\mathbf{y}} + y_1 \bar{\bfx}
    \end{bmatrix}
    , \quad \mathbf{x} = 
    (x_1, \bar{\bfx}) , \quad 
    \mathbf{y} = 
    (y_1, \bar{\bfy})
    \in \bbR \times \bbR^{m-1}
    ,
    \label{eq:circ}
\end{align}
and $\circ: \bbR[\bfx]^m_{n,d} \times \bbR[\bfx]^m_{n,d} \to  \bbR[\bfx]^m_{n,2d}$ on polynomial vectors is defined analogously.
This set was also studied by Kojima and Muramatsu with a focus on extending Positivstellensatz results \citep{kojima2007extension}.
The validity of  $\Ksosso$ as a restriction of \cref{eq:polysoc} follows from the  the characterization of the second order cone as a \emph{cone of squares} \citep[Section 4]{alizadeh2003second}. 
For this reason we will refer to the elements of $\Ksosso$ as the \emph{SOS-L$2$} polynomials.
For a polynomial vector in $\bbR[\bfx]_{n,2d}^m$, the dimension of $\Ksosso$ is $U m$, which is favorable to the dimension $U \sdim(m)$ of $\Ksos$ required for \cref{eq:arrowpsd} or $\Ksospsd$ in \cref{eq:arrowpsdpsd}. 
In addition, we show in \cref{sec:barriers:so} that $\Ksosso$ admits an LHSCB with smaller parameter than $\K_{\arrow \SOSpsd}$. 
However, we conjecture that for general $n$ and $d$, $\Ksosso \subsetneq \K_{\arrow \SOSpsd}$ (for example, consider the vector $[1 + x^2, 1 - x^2, 2 x]$, which belongs to $\K_{\arrow \SOSpsd}$ but not $\Ksosso$).
Our experiments in \cref{sec:experiments} also include instances where using $\Ksosso$ and $\K_{\arrow \SOSpsd}$ gives different objective values.
A third formulation can be obtained by modifying the SDP formulation for $\K_{\arrow \SOSpsd}$ to account for all sparsity in the $\bfy$ monomials (by introducing a specialized cone for the Gram matrix of $\bfy^\top \arrow(\bfq(\bfx)) \bfy$).
However, this approach suffers from requiring $\cO(L^2)$ conic variables for each polynomial in $\bfq(\bfx)$, so we choose to focus on $\Ksos$ and $\Ksospsd$ formulations for $\K_{\arrow \SOSpsd}$ instead.

The third and final set we consider is also described through a constraint on a polynomial vector $\bfq(\bfx) \in \mathbb{R}[\bfx]_{n,2d}^m$. 
This constraint is given by:
\begin{equation}
{q}_1(\bfx) \geq \tsum{i \in \iin{2..m}} \vert {q}_i(\bfx) \vert\quad \forall \bfx, \label{eq:polyl1}
\end{equation}
and hence requires the polynomial vector to be in the epigraph set of the $\ell_1$-norm function (\emph{$\ell_1$-norm cone}) pointwise. 
A tractable restriction for this constraint is given by the SOS formulation: 
\begin{subequations}
\begin{align}
    q_1(\bfx) - \tsum{i \in \iin{2..m}} ( p_{i}(\bfx)^{+} + p_{i}(\bfx)^{-} )
    & \in \Ksos,
    \\
    q_i(\bfx) &= p_{i}(\bfx)^{+} - p_{i}(\bfx)^{-} & \forall i \in \iin{2..m},
    \\
    p_{i}(\bfx)^{+}, p_{i}(\bfx)^{-} & \in \Ksos & \forall i \in \iin{2..m},
\end{align}
\label{eq:extl1}
\end{subequations}
which uses auxiliary polynomial variables $p_{i \in \iin{2..m}}^+(\bfx) \in \bbR[\bfx]_{n,2d}$ and $p_{i \in \iin{2..m}}^-(\bfx) \in \bbR[\bfx]_{n,2d}$. 
We refer to the projection of \cref{eq:extl1} onto $\mathbf{q(x)} \in \mathbb{R}[\bfx]_{n,2d}^m$ as $\Ksoslo$ and to its elements as the \emph{SOS-L$1$} polynomials. 
Note that the dimension of $\Ksoslo$ is $U m$, while \cref{eq:extl1} requires $2m-1$ SOS cones of dimension $U$ and $U(m - 1)$ additional equality constraints.
In \cref{sec:barriers:l1} we derive an $L m$-LHSCB that allows us to optimize over $\Ksoslo$ directly, while \cref{eq:extl1} would require an LHSCB with parameter $L (2 m  - 1)$. 

We summarize some key properties of the new cones and SOS formulations in \cref{tab:sets}: 
the total dimension of cones involved, 
the parameter of an LHSCB for the conic sets,
the time complexity to calculate the Hessian of the LHSCB (discussed in \cref{sec:implementation}), 
the level of conservatism of each new conic set compared to its alternative SOS formulation, 
and the number of auxiliary equality constraints and variables that need to be added in an optimization problem.

\begin{table}[!htb]
    \centering
    \begin{tabular}{{l}*{6}{l}}
    \toprule
        & \multicolumn{2}{l}{SOS-PSD} & \multicolumn{2}{l}{SOS-L2} & \multicolumn{2}{l}{SOS-L1} \\
        \cmidrule(lr){2-3}\cmidrule(lr){4-5}\cmidrule(lr){6-7}
         & $\Ksospsd$ & \eqref{eq:scalarsospsd} & $\Ksosso$ & \eqref{eq:arrowpsd} & $\Ksoslo$ & \eqref{eq:extl1} \\
        \midrule
        cone dim. & $U \sdim(m)$ & $U \sdim(m)$ & $U m$ & $U \sdim(m)$ &  $U m$ & $U (2m - 1)$ \\
        parameter & $L m$ & $L m$ & $2 L$ & $L m$ & $L m$ & $L (2m - 1)$ \\
        Hessian flops & $\cO(L U^2 m^3)$ & $\cO(L U^2 m^5)$ & $\cO(L U^2 m^2)$ & $\cO(L U^2 m^5)$ & $\cO(L U^2 m)$ & $\cO(L U^2 m)$ \\
        conservatism & equal & - & greater & - & equal & - \\
        equalities & 0 & 0 & 0 & 0 & 0 & $U (m-1)$ \\
        variables & 0 & 0 & 0 & 0 & 0 & $2 U (m-1)$ \\
        \bottomrule
    \end{tabular}
    \caption{Properties of new cones compared to SOS formulations. 
    }
    \label{tab:sets}
\end{table}

\section{SOS-PSD and SOS-L2 cones from general algebras}
\label{sec:algebras}

The ideas introduced by \citet{papp2013semidefinite} relating to SOS cones in \emph{general algebras} allow us to characterize $\Ksospsd$ and $\Ksosso$ without auxiliary SOS polynomial constraints.
As in \citet{papp2013semidefinite}, let us define $(A,B,\diamond)$ as a {general algebra} if $A, B$ are vector spaces and $\diamond: A \times A \to B$ is a bilinear product that satisfies the distributive property. 
For a general algebra $(A,B,\diamond)$, \citet{papp2013semidefinite} define the SOS cone $\K_\diamond$:
\begin{align}
    \K_\diamond = \{
    b \in B: \exists N \in \bbN, a_{i \in \iin{1..N}} \in A, b = \tsum{i \in \iin{1..N}} a_i \diamond a_i
    \}.
\end{align}
For instance, $\bbS_+$ is equal to the SOS cone of $(\bbR^m, \bbS^m, \overline{\diamond})$ for $\overline{\diamond}$ given by $\mathbf{x} \overline{\diamond} \mathbf{y} = \tfrac{1}{2} (\mathbf{x} \mathbf{y}^\top + \mathbf{y} \mathbf{x}^\top)$.
The second order cone is equal to the SOS cone of $(\bbR^m, \bbR^m, \circ)$.
$\Ksos$ is equal to the SOS cone of $(\mathbb{R}[\bfx]_{n,d}, \mathbb{R}[\bfx]_{n,2d}, \cdot)$ where $\cdot$ is the product of polynomials. 
To obtain our vectorized representation of $\Ksos$ we can redefine the function $\lambda: \bbR^L \times \bbR^L \to \bbR^U$ so that for $\bfp_i, \bfp_j \in \bbR^L$ representing coefficients of any polynomials in $\bbR[\bfx]_{n,d}$, $\lambda(\bfp_i, \bfp_j)$ returns the vector of coefficients of the product of the polynomials. 
Then $\Ksos$ is equal to the SOS cone of $(\bbR^L, \bbR^U, \lambda)$. 

As we describe in \cref{sec:algebras:liftingoperators}, \citet{papp2013semidefinite} also show how to build lifting operators for general algebras. 
This allows us to construct membership checks and easily computable LHSCBs for $\Ksospsd^\ast$ and $\Ksosso^\ast$ once we represent them as SOS cones of \emph{tensor products} of algebras. 

The tensor product of two algebras $(A_1, B_1, \diamond_1)$ and $(A_2, B_2, \diamond_2)$ is a new algebra $(A_1 \otimes A_2, B_1 \otimes B_2, \diamond_1 \otimes \diamond_2)$, where $\diamond_1 \otimes \diamond_2$ is defined via its action on \emph{elementary tensors}.
For $\bfu_1, \bfv_1 \in A_1$ and $\bfu_2, \bfv_2 \in A_2$:
\begin{align}
    (\bfu_1 \otimes \bfu_2) \diamond_1 \otimes \diamond_2 (\bfv_1 \otimes \bfv_2) = (\bfu_1 \diamond_1 \bfv_1) \otimes (\bfu_2 \diamond_2 \bfv_2).
\end{align}
The algebra we are interested in for a functional representation of $\Ksospsd$ is the tensor product of $(\bbR[\bfx]_{n,d}, \bbR[\bfx]_{n,2d}, \cdot)$ with $(\bbR^m, \bbS^m, \bar{\diamond})$.
We can think of elements in $\bbR[\bfx]_{n,d} \otimes \bbR^m$ as polynomial vectors in $\bbR[\bfx]_{n,d}^m$, and $\bbR[\bfx]_{n,2d} \otimes \bbS^m$ as the symmetric polynomial matrices in $\bbR[\bfx]_{n,2d}^{m \times m}$.
The SOS cone of $(\bbR[\bfx]_{n,d} \otimes \bbR^m, \bbR[\bfx]_{n,2d} \otimes \bbS^m, \cdot \otimes \bar{\diamond})$ corresponds to the polynomial matrices that can be written as $\tsum{i \in \iin{1..N}} \mathbf{m}_i(\bfx) \mathbf{m}_i(\bfx)^\top$ with $\mathbf{m}_i(\bfx) \in \bbR[\bfx]^m$ for all $i \in \iin{1..N}$ \citep[Section 4.3]{papp2013semidefinite}, which is exactly $\Ksospsd$.
Equivalently, a vectorized representation of $\Ksospsd$ can be characterized as the SOS cone of $(\bbR^L \otimes \bbR^m, \bbR^U \otimes \bbS^m, \lambda \otimes \bar{\diamond})$.
We can think of $\bbR^L \otimes \bbR^m$ as $\bbR^{L \times m}$ and we can think of $\bbR^U \otimes \bbS^m$ as a subspace of $\bbR^{U \times m \times m}$ that represents the coefficients of symmetric polynomial matrices.

Likewise, the algebra we are interested in for a functional representation of $\Ksosso$ is the tensor product of $(\bbR[\bfx]_{n,d}, \bbR[\bfx]_{n,2d}, \cdot)$ with $(\bbR^m, \bbR^m, \circ)$.
We can think of $\bbR[\bfx]_{n,d} \otimes \bbR^m$ and $\bbR[\bfx]_{n,2d} \otimes \bbR^m$ as $\bbR[\bfx]_{n,d}^m$ and $\bbR[\bfx]_{n,2d}^m$ respectively.
The SOS cone of the tensor product of these algebras then corresponds to $\Ksosso$ due to \cref{eq:functionalksosso}.
A vectorized representation of $\Ksosso$ may be characterized as the SOS cone of $(\bbR^L \otimes \bbR^m, \bbR^U \otimes \bbR^m, \lambda \otimes \circ)$.
We can think of $\bbR^U \otimes \bbR^m$ as the coefficients of polynomial vectors, represented in $\bbR^{U \times m}$.

\subsection{Lifting operators for SOS-PSD and SOS-L2}
\label{sec:algebras:liftingoperators}

The lifting operator of $(A, B, \diamond)$, when $A$ and $B$ are finite dimensional, is defined by \citet{papp2013semidefinite} as the function $\lift_\diamond: B \to \bbS^{\vert A \vert}$  satisfying $\langle a_1, \lift_\diamond(b) a_2 \rangle_A = \langle b, a_1 \diamond a_2 \rangle_B$ for all $a_1, a_2 \in A$, $b \in B$.
This leads to the following descriptions of $\K_\diamond$ and $\K_\diamond^\ast$ \citep[Theorem 3.2]{papp2013semidefinite}:
\begin{subequations}
\begin{align}
    \K_\diamond &= \{ \bfs \in B: \exists \bfS \succeq 0, \bfs = \lift_\diamond^\ast(\bfS) \},
    \label{eq:algebras:primal}
    \\
    \K_\diamond^\ast &= \{ \bfs \in B : \lift_\diamond(\bfs) \succeq 0 \}.
    \label{eq:algebras:dual}
\end{align}
\label{eq:algebras}
\end{subequations}

Recall that in order to use either $\K_\diamond$ or $\K_\diamond^\ast$ in a generic interior point algorithm, we require efficient oracles for a membership check and derivatives of an LHSCB of $\K_\diamond$ or $\K_\diamond^\ast$.
If $\lift_\diamond(\bfs)$ is efficiently computable, \cref{eq:algebras:dual} provides a membership check for $\K_\diamond^\ast$.
Furthermore, an LHSCB for $\K_\diamond^\ast$ is given by $\bfs \mapsto -\logdet ({\lift}_\diamond(\bfs))$ with barrier parameter $\vert A \vert$ due to the linearity of ${\lift}_\diamond$ \citep[Proposition 5.1.1]{nesterov1994interior}.
The following lemma describes how to compute $\lift_\diamond(\bfs)$ for a tensor product algebra.

\begin{lemma}
\label{lemma:tensorlift}
\citep[Lemma 5.1]{papp2013semidefinite}: If $\bfw_1 \in B_1$ and $\bfw_2 \in B_2$, then:
\begin{align}
    \lift_{\diamond_1 \otimes \diamond_2}(\bfw_1 \otimes \bfw_2) = \lift_{\diamond_1}(\bfw_1) \otimes_K \lift_{\diamond_2}(\bfw_2).
\end{align}
\end{lemma}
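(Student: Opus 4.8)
The plan is to verify the defining relation of the lifting operator directly on elementary tensors, then invoke bilinearity/linearity to extend. Recall $\lift_{\diamond_1 \otimes \diamond_2}(\bfw)$ for $\bfw \in B_1 \otimes B_2$ is the unique symmetric operator on $A_1 \otimes A_2$ satisfying $\langle \bfa, \lift_{\diamond_1 \otimes \diamond_2}(\bfw)\, \bfa' \rangle = \langle \bfw, \bfa \,(\diamond_1 \otimes \diamond_2)\, \bfa' \rangle$ for all $\bfa, \bfa' \in A_1 \otimes A_2$. Since both sides of the claimed identity are linear in $\bfw$ and every element of $B_1 \otimes B_2$ is a finite sum of elementary tensors $\bfw_1 \otimes \bfw_2$, and since the operators in question are determined by their action on elementary tensors $\bfa = \bfu_1 \otimes \bfu_2$, $\bfa' = \bfv_1 \otimes \bfv_2$ (which span $A_1 \otimes A_2$), it suffices to check the identity after pairing with such elementary tensors.

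First I would compute the right-hand side. Using the standard fact that $\langle \bfx_1 \otimes \bfx_2, (M_1 \otimes_K M_2)(\bfy_1 \otimes \bfy_2)\rangle_{A_1 \otimes A_2} = \langle \bfx_1, M_1 \bfy_1\rangle_{A_1}\,\langle \bfx_2, M_2 \bfy_2\rangle_{A_2}$ (with the inner product on $A_1 \otimes A_2$ taken to be the tensor product of the inner products on the factors), I get
\begin{align}
    \langle \bfu_1 \otimes \bfu_2,\, (\lift_{\diamond_1}(\bfw_1) \otimes_K \lift_{\diamond_2}(\bfw_2))(\bfv_1 \otimes \bfv_2)\rangle
    = \langle \bfu_1, \lift_{\diamond_1}(\bfw_1)\bfv_1\rangle_{A_1}\,\langle \bfu_2, \lift_{\diamond_2}(\bfw_2)\bfv_2\rangle_{A_2}.
\end{align}
Then I would apply the defining property of $\lift_{\diamond_1}$ and $\lift_{\diamond_2}$ to rewrite each factor, obtaining $\langle \bfw_1, \bfu_1 \diamond_1 \bfv_1\rangle_{B_1}\,\langle \bfw_2, \bfu_2 \diamond_2 \bfv_2\rangle_{B_2}$. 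Next I would compute the left-hand side: by the definition of $\diamond_1 \otimes \diamond_2$ on elementary tensors, $(\bfu_1 \otimes \bfu_2)(\diamond_1 \otimes \diamond_2)(\bfv_1 \otimes \bfv_2) = (\bfu_1 \diamond_1 \bfv_1) \otimes (\bfu_2 \diamond_2 \bfv_2)$, so pairing with $\bfw_1 \otimes \bfw_2$ and again using that the inner product on $B_1 \otimes B_2$ factors on elementary tensors gives exactly $\langle \bfw_1, \bfu_1 \diamond_1 \bfv_1\rangle_{B_1}\,\langle \bfw_2, \bfu_2 \diamond_2 \bfv_2\rangle_{B_2}$. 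The two sides agree, and uniqueness of the lifting operator (it is the unique operator implementing that bilinear form) finishes the argument, with symmetry of $\lift_{\diamond_1}(\bfw_1) \otimes_K \lift_{\diamond_2}(\bfw_2)$ following from symmetry of each factor.

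The main obstacle is bookkeeping rather than depth: one must be careful that the inner product chosen on the tensor-product spaces $A_1 \otimes A_2$ and $B_1 \otimes B_2$ is indeed the tensor product of the component inner products (so that the "factoring" identities used above hold), and that $\diamond_1 \otimes \diamond_2$ is well-defined on all of $A_1 \otimes A_2$ — i.e.\ that extending bilinearly from elementary tensors is consistent. Both points are part of the setup in \citet{papp2013semidefinite} and can be cited; the only genuine computation is the short chain of equalities above. Since this lemma is quoted verbatim from \citet[Lemma 4.1]{papp2013semidefinite}, it would be reasonable to present just this sketch and refer the reader there for the routine verification.
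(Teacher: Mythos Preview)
Your proof is correct; the paper itself does not prove this lemma at all but simply quotes it from \citet[Lemma 4.1]{papp2013semidefinite}, so there is no ``paper's own proof'' to compare against. The argument you give --- verifying the defining relation of the lifting operator on elementary tensors using the factoring of the tensor-product inner product and the definition of $\diamond_1 \otimes \diamond_2$, then extending by bilinearity and invoking uniqueness --- is exactly the standard verification one would expect in the cited source, and your closing remark that a sketch plus a pointer to \citet{papp2013semidefinite} suffices matches how the paper handles it.
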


Let us define $\otimes: \bbR^U \times \bbS^m \to \bbR^{U \times m \times m}$ such that $(\bfu \otimes \bfV)_{i,j,k} = u_i V_{j,k}$ and let us represent the coefficients of a polynomial matrix by a tensor $\bfS \in \mathbb{R}^{U \times m \times m}$.
Then we may write $\mathbf{S} = \tsum{i \in \iin{1..m}, j \in \iin{1..i}} \mathbf{S}_{i,j} \otimes \mathbf{E}_{i,j}$, where $\mathbf{E}_{i,j} \in \bbR^{m \times m}$ is a matrix of zeros and ones with $E_{i,j} = E_{j,i} = 1$ and $\mathbf{S}_{i,j} \in \bbR^U$ are the coefficients of the polynomial in row $i$ and column $j$. 
Applying \cref{lemma:tensorlift}, the lifting operator for $\Ksospsd$, $\lampsd :\bbR^{U \times m \times m} \to \bbS^{L m}$ is:
\begin{subequations}
\begin{align}
    \lampsd(\mathbf{S}) = \lift_{\lambda \otimes \bar{\diamond}}(\bfS) &= \lift_{\lambda \otimes \bar{\diamond}}(\tsum{i \in \iin{1..m}, j \in \iin{1..i}}  \mathbf{S}_{i,j} \otimes \mathbf{E}_{i,j})
    \\
    &= \tsum{i \in \iin{1..m}, j \in \iin{1..i}} \lam(\mathbf{S}_{i,j}) \otimes_K \mathbf{E}_{i,j}
    .
\end{align}
\label{eq:lampsd}
\end{subequations}
The output is a block matrix, where each $L \times L$ submatrix in the $i$th group of rows and $j$th group of columns is $\lampsd(\mathbf{S})_{i,j} = \lam(\bfS_{i,j})$ for all $i, j \in \iin{1..m}$.
The adjoint operator $\lampsd^\ast: \mathbb{S}^{L m} \to \mathbb{R}^{U \times m \times m}$ may also be defined blockwise,
$\lampsd^{\ast}(\bfS)_{i,j} = \lam^\ast(\bfS_{i,j})$ for all $i, j \in \iin{1..m}$ where $\bfS_{i,j} \in \bbR^{L \times L}$ is the $(i,j)$th submatrix in $\bfS$.

Likewise, we use a tensor $\mathbf{s} \in \mathbb{R}^{U \times m}$ to describe the coefficients of a polynomial vector, and write $\mathbf{s}_i \in \mathbb{R}^U$ to denote the vector of coefficients of the polynomial in component $i$.
Applying \cref{lemma:tensorlift} again, we obtain the (blockwise) definition of the lifting operator for $\Ksosso$, $\lamso: \mathbb{R}^{U \times m} \to \bbS^{Lm}$:
\begin{align}
\lamso(\bfs)_{i,j} =  
\begin{cases}
\lam(\bfs_1)
&
i = j
\\
\lam(\bfs_j)
&
i = 1, j \neq 1
\\
\lam(\bfs_i)
&
i \neq 1, j = 1
\\
0 & \text{otherwise}
\end{cases}
& \quad \forall i, j \in \iin{1..m},
\end{align}
where $\lamso(\bfs)_{i,j} \in \bbS^L$ is the $(i,j)$th submatrix of $\lamso(\bfs)$. 
Thus $\lamso(\bfs)$ has a block arrowhead structure.
The output of the adjoint operator $\lamso^\ast: \mathbb{S}^{L m} \to \mathbb{R}^{U \times m}$ may be defined as:
\begin{align}
    \lamso^\ast(\mathbf{S})_i =
    \begin{cases}
    \tsum{j \in \iin{1..m}} \lam^{\ast}(\bfS_{j,j}) & i = 1 \\
    \lam^{\ast}(\bfS_{1,i}) + \lam^{\ast}(\bfS_{i,1}) & i \neq 1
    \end{cases}
    & \quad \forall i \in \iin{1..m},
    \label{eq:ksossodef}
\end{align}
where $\lamso^\ast(\mathbf{S})_i \in \bbR^U$ is the $i$th slice of $\lamso^\ast(\mathbf{S})$ and $\bfS_{i,j} \in \bbR^{L \times L}$ is the $(i,j)$th block in $\bfS$ for all $i, j \in \iin{1..m}$.

\section{Efficient barriers for SOS-L2 and SOS-L1}
\label{sec:barriers}

As for $\Ksospsd^\ast$ and $\Ksosso^\ast$, we show that a barrier for $\Ksoslo^\ast$ can be obtained by composing a linear lifting operator with the $\logdet$ barrier.
This is sufficient to optimize over $\Ksospsd$, $\Ksosso$ and $\Ksoslo$ without high dimensional SDP formulations.
However, for $\Ksosso^\ast$ and $\Ksoslo^\ast$ we can derive improved barriers by composing nonlinear functions with the $\logdet$ barrier instead.
We show that these compositions are indeed LHSCBs. 

\subsection{SOS-L2}
\label{sec:barriers:so}

Recall \cref{eq:algebras:dual} suggests that checking membership in $\Ksosso^\ast$ amounts to checking positive definiteness of $\lamso(\bfs)$ with side dimension $L m$.
This membership check corresponds to a \emph{straightforward} LHSCB with parameter $L m$ given by $\bfs \mapsto  -\logdet(\lamso(\bfs))$.
We now show that by working with a Schur complement of $\lamso(\bfs)$, we obtain a membership check for $\Ksosso^\ast$ that requires factorizations of only two matrices with side dimension $L$ and implies an LHSCB with parameter $2 L$.

Let $\bfPi: \bbR^{U \times m} \to \bbS^L$ return the Schur complement:
\begin{align}
    \bfPi(\bfs) =
    {\lam}(\bfs_{1}) - \tsum{i \in \iin{2..m}} {\lam}(\bfs_{i}) {\lam}(\bfs_{1})^{-1} {\lam}(\bfs_{i}).
    \label{eq:pi}
\end{align}
By \cref{eq:algebras:dual,eq:pi}:
\begin{subequations}
\begin{align}
\Ksosso^\ast & = 
    \{ \bfs \in \bbR^{U \times m}: \lamso(\bfs) \succeq 0 \} 
    \\
    & = \cl \{ \bfs \in \bbR^{U \times m}:  \lamso(\bfs) \succ 0 \} 
    \\
    & =
    \cl \{
    \bfs \in \bbR^{U \times m}:
    {\lam}(\bfs_{1}) \succ 0, 
    \bfPi(\bfs) \succ 0
    \}
    .
    \label{eq:schur}
\end{align}
\end{subequations}
\cref{eq:schur} describes a simple membership check.
Furthermore, the function $F: \bbR^{U \times m} \to \bbR$ defined by:
\begin{subequations}
\begin{align}
    F(\bfs) &= -\logdet ( \bfPi(\bfs) ) - \logdet({\lam}(\bfs_{1})) 
    \\
    &= -\logdet(\lamso(\bfs)) + (m - 2) \logdet (\lam(\bfs_1))
    ,
\end{align}
\label{eq:sosbarr}
\end{subequations}
is a $2 L$-LHSCB barrier for $\Ksosso$.

\begin{theorem}
The function $F$ defined by \cref{eq:sosbarr} is a $2 L$-LHSCB for $\Ksosso^\ast$.
\label{thm:sobarr}
\end{theorem}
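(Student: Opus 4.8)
The plan is to exhibit $F$ as a composition of a linear map with a self-concordant barrier on a product of PSD cones, and then to verify the two defining inequalities of logarithmic homogeneity. Write $G(\bfX, \bfY) = -\logdet(\bfX) - \logdet(\bfY)$, which is a standard $2L$-LHSCB for $\bbS^L_{++} \times \bbS^L_{++}$ (by \citep[Proposition 5.1.1]{nesterov1994interior} applied twice). If $F$ were obtained by composing $G$ with a \emph{linear} map, self-concordance would be immediate; the issue is that $\bfs \mapsto \bfPi(\bfs)$ is not linear because of the Schur-complement term $\sum_i \lam(\bfs_i)\lam(\bfs_1)^{-1}\lam(\bfs_i)$. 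So the first step is to recall (from the theory of Schur complements, e.g. \citep{nesterov1994interior} or a direct argument) that for the block arrowhead matrix $\lamso(\bfs)$ one has the identity $-\logdet(\lamso(\bfs)) = -\logdet(\lam(\bfs_1)) - (m-2)\logdet(\lam(\bfs_1)) - \logdet(\bfPi(\bfs))$ wait—more precisely, factoring the arrowhead blockwise gives $\logdet(\lamso(\bfs)) = (m-1)\logdet(\lam(\bfs_1)) + \logdet(\bfPi(\bfs))$, which is exactly the content of the second line of \cref{eq:sosbarr}. This reduces the claim to showing that $\bfs \mapsto -\logdet(\lamso(\bfs))$ composed with nothing, plus the convex correction $(m-2)\logdet(\lam(\bfs_1))$, is self-concordant — but that correction goes the "wrong way" for $m \geq 3$, so the clean route is instead the following.

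The cleaner approach: recognize $F$ directly as the pullback of the $2L$-LHSCB $G$ on $\bbS^L_{++}\times\bbS^L_{++}$ under the map $\Phi(\bfs) = (\lam(\bfs_1), \bfPi(\bfs))$, and prove self-concordance of $F = G\circ\Phi$ using the composition rule for self-concordant barriers with a map whose components are "PSD-concave" in the appropriate sense. Concretely, $\bfPi$ is the Schur complement of a positive definite linear section, and it is a standard fact that $\bfs\mapsto\bfPi(\bfs)$ is $\bbS^L_+$-concave on its domain (the Schur complement of a jointly-PSD-linear block matrix is concave in the operator order); likewise $\bfs\mapsto\lam(\bfs_1)$ is linear, hence trivially concave. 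Then I would invoke the standard result (see \citep[Proposition 5.1.7 or the substitution-of-variables rules]{nesterov1994interior}, or Nemirovski's lecture notes) that if $G$ is a $\vartheta$-self-concordant barrier that is nonincreasing in each argument with respect to the order and $\Phi$ has $\bbS_+$-concave components compatible with that monotonicity, then $G\circ\Phi$ is a $\vartheta$-self-concordant barrier on $\Phi^{-1}(\operatorname{int})$. Since $-\logdet$ is operator-monotone-decreasing, this applies and yields that $F$ is self-concordant with the \emph{same} parameter $2L$ as $G$. The domain $\{\lam(\bfs_1)\succ 0,\ \bfPi(\bfs)\succ 0\}$ has interior equal to $\intr\Ksosso^\ast$ by \cref{eq:schur}, so $F$ is a barrier for $\Ksosso^\ast$.

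It remains to check logarithmic homogeneity with parameter $2L$, i.e. $F(t\bfs) = F(\bfs) - 2L\log t$ for $t>0$. This is a direct computation: $\lam$ is linear so $\lam((t\bfs)_1) = t\,\lam(\bfs_1)$, and $\bfPi$ is homogeneous of degree one because $\bfPi(t\bfs) = t\lam(\bfs_1) - \sum_i (t\lam(\bfs_i))(t\lam(\bfs_1))^{-1}(t\lam(\bfs_i)) = t\,\bfPi(\bfs)$; hence $F(t\bfs) = -\logdet(t\bfPi(\bfs)) - \logdet(t\lam(\bfs_1)) = -L\log t - \logdet\bfPi(\bfs) - L\log t - \logdet\lam(\bfs_1) = F(\bfs) - 2L\log t$. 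Combining logarithmic homogeneity with self-concordance gives the LHSCB property (the barrier parameter of an LHSCB equals the homogeneity degree, cf. \citep[Definitions 2.3.1–2.3.2]{nesterov1994interior}). The main obstacle I anticipate is the self-concordance step: verifying that $G\circ\Phi$ is self-concordant cannot be done by the purely-linear composition rule, so one must either cite the operator-concavity composition theorem precisely (and check its monotonicity hypotheses for $-\logdet$), or give a hands-on estimate of the third differential of $F$ against its second — the latter amounting to bounding the derivatives of the Schur complement map, which is where the real work lies.
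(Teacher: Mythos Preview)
Your route is the paper's route: both arguments use the compatibility framework of \citet[Proposition~5.1.7]{nesterov1994interior} applied to the Schur-complement map, then pull back by a linear map. The paper's organization differs only in that it first works in the auxiliary matrix cone $\K_{\ell_2}^m \subset \bbS^L \times (\bbR^{L\times L})^{m-1}$, proving that $\hat F(\bfX_1,\ldots,\bfX_m) = -\logdet\bigl(\bfX_1 - \sum_{i\geq 2}\bfX_i\bfX_1^{-1}\bfX_i^\top\bigr) - \logdet(\bfX_1)$ is a $2L$-self-concordant barrier there, and only afterward composes with the linear map $\bfs_i \mapsto \lam(\bfs_i)$; your version does the linear pullback first and the Schur complement in $\bfs$-space, which is equivalent.

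The step you correctly flag as ``where the real work lies'' is the only substantive gap, and it is sharper than your sketch suggests: operator concavity of the Schur complement together with monotonicity of $-\logdet$ is \emph{not} by itself enough to preserve the parameter $2L$ under composition. What Proposition~5.1.7 actually requires is $(\bbS_+^L,1)$-compatibility of $\bfG(\bfX) = \bfX_1 - \sum_{i\geq 2}\bfX_i\bfX_1^{-1}\bfX_i^\top$ with the domain $\Gamma = \bbS_+^L \times (\bbR^{L\times L})^{m-1}$, i.e.\ the third-derivative bound $D^3\bfG[\bfV,\bfV,\bfV] \preceq -3\,D^2\bfG[\bfV,\bfV]$ for all directions $\bfV$ with $-\bfX_1 \preceq \bfV_1 \preceq \bfX_1$. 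The paper verifies this by direct computation: both directional derivatives decouple over $i\in\iin{2..m}$, and for each $i$ the required inequality reduces to $\bfX_1^{-1}\bfV_1\bfX_1^{-1} \preceq \bfX_1^{-1}$, which is immediate from $\bfV_1 \preceq \bfX_1$. Your abandoned first route (via the arrowhead determinant identity plus the $(m-2)\logdet$ correction) is correctly discarded for the reason you give.
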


\begin{proof}
It is easy to verify that $F$ is a logarithmically homogeneous barrier, so we show it is a $2L$-self-concordant barrier for $\Ksosso^\ast$.
We first show that $\hat{F}: \bbS_{++}^L \times (\bbR^{L \times L})^{m-1} \to \bbR$ defined as $\hat{F}(\bfX_1, \ldots, \bfX_m) = -\logdet(\bfX_1 - \tsum{i \in \iin{2..m}} \bfX_i \bfX_{1}^{-1} \bfX_i^{\top}) - \logdet(\bfX_1)$, is a $2 L$-self-concordant barrier for the cone:
\begin{align}
    \K_{\ell_2}^m = \cl \bigl\{
   (\bfX_1, \ldots, \bfX_m) \in \bbS^L_{++} \times (\bbR^{L \times L})^{m - 1}:
   \bfX_1 - \tsum{i \in \iin{2..m}} \bfX_i \bfX_{1}^{-1} \bfX_i^{\top} \succ 0 
   \bigr\}.
    \label{eq:blocksoc}
\end{align}
We then argue that $F$ is a composition of $\hat{F}$ with the linear map $(\bfs_1, \ldots, \bfs_m) \mapsto (\lam(\bfs_1), \ldots, \lam(\bfs_m))$ and $\Ksosso^\ast$ is an inverse image of $\K_{\ell_2}^m$ under the same map.
Then by \citet[Proposition 5.1.1]{nesterov1994interior} $F$ is self-concordant.

Let $\Gamma = \bbS^L_{+} \times (\bbR^{L \times L})^{m-1}$ and $\bfG: \intr(\Gamma) \to \bbS^L$ be defined as:
\begin{align}
    \bfG(\bfX_1, \ldots, \bfX_m) = \bfX_1 - \tsum{i \in \iin{2..m}} \bfX_i \bfX_{1}^{-1} \bfX_i^\top.
\end{align}
Let us check that $\bfG$ is $(\bbS_{+}^L, 1)$-compatible with the domain $\Gamma$ in the sense of \cite[Definition 5.1.1]{nesterov1994interior}.
This requires that 
$\bfG$ is $C^3$-smooth on $\intr(\Gamma)$, 
$\bfG$ is concave with respect to $\bbS_{+}^L$, 
and at each point $\bfX = (\bfX_1, \ldots, \bfX_m)  \in \intr(\Gamma)$ and any direction $\bfV = (\bfV_1, \ldots, \bfV_m) \in \bbS^L \times (\bbR^{L \times L})^{m-1}$ such that $-\bfX_1 \preceq \bfV_1 \preceq \bfX_1$, the directional derivatives of $\bfG$ satisfy:
\begin{align}
    \tfrac{d^3 \bfG}{d \bfX^3}[\bfV, \bfV, \bfV] \preceq
    -3 \tfrac{d^2 \bfG}{d \bfX^2}[\bfV, \bfV] 
    . 
    \label{eq:defcompat}
\end{align}

Let $\bfV \in \bbS^L \times (\bbR^{L \times L})^{m-1}$.
It can be checked that $\tfrac{d^3 \bfG}{d \bfX^3}$ is continuous on the domain of $\bfG$ and we have the directional derivatives:
\begin{align}
    \tfrac{d^2 \bfG}{d \bfX^2}[\bfV, \bfV]
    &= -2 \tsum{i \in \iin{2..m}} (\bfX_i \bfX_{1}^{-1} \bfV_1 - \bfV_i)\bfX_{1}^{-1}(\bfX_i \bfX_{1}^{-1} \bfV_1 - \bfV_i)^\top,
    \\
    \tfrac{d^3 \bfG}{d \bfX^3}[\bfV, \bfV, \bfV]
    &= 6 \tsum{i \in \iin{2..m}} (\bfX_i \bfX_{1}^{-1} \bfV_1 - \bfV_i)\bfX_{1}^{-1} \bfV_1 \bfX_{1}^{-1}(\bfX_i \bfX_{1}^{-1} \bfV_1 - \bfV_i)^\top.
\end{align}
Since $\bfX_1 \succ 0$ in $\intr(\Gamma)$, $-\tfrac{d^2 \bfG}{d \bfX^2}[\bfV, \bfV] \succeq 0$ and so by \citet[Lemma 5.1.2]{nesterov1994interior}, $\bfG$ is concave with respect to $\bbS_{+}^L$.
It remains to show that \eqref{eq:defcompat} is satisfied.
Since the directional derivatives decouple by each index $i$ in the sum, it is sufficient to show that the inequality is satisfied for each $i \in \iin{2..m}$.
For this, it is sufficient that:
\begin{align}
    6 \bfX_{1}^{-1} \bfV_1 \bfX_{1}^{-1} \preceq -3 \times -2 \bfX_{1}^{-1},
    \label{eq:compatreq}
\end{align}
for all $-\bfX_1 \preceq \bfV_1 \preceq \bfX_1$, which follows since $\bfX_1$ is positive definite on $\intr(\Gamma)$.
Now by \cite[proposition 5.1.7]{nesterov1994interior}, $\hat{F}$ is a $2L$-LHSCB.
The same is true for $F$ by composing $\hat{F}$ with a linear map.
\qed
\end{proof}

\subsection{SOS-L1}
\label{sec:barriers:l1}

By combining \cref{eq:characterization} and \cref{eq:extl1}, the $\Ksoslo$ cone admits the semidefinite representation:
\begin{align}
    \Ksoslo = \left\{ 
    \begin{aligned}
    \bfs
    \in \bbR^{U \times m}: \exists \bfS_1, \bfS_{2,+}, \bfS_{2,-} \ldots, \bfS_{m,+}, \bfS_{m,-} \in \bbS_{+}^L, 
    \\
    \bfs_1 = \lam^{\ast}(\bfS_{1}) + \tsum{i \in \iin{2..m}} \lam^{\ast}(\bfS_{i,+} + \bfS_{i,-}),
    \\
    \bfs_i = \lam^{\ast}(\bfS_{i,+}) - \lam^{\ast}(\bfS_{i,-}) \ \forall i \in \iin{2..m}
    \end{aligned}
    \right\}.
\end{align}
Its dual cone is:
\begin{align}
    \Ksoslo^\ast = 
    \bigl\{ 
    \bfs
    \in \bbR^{U \times m}: 
    \lam(\bfs_{1} + \bfs_{i}) \succeq 0, 
    \lam(\bfs_{1} - \bfs_{i}) \succeq 0 \
    \forall i \in \iin{2..m}
    \bigr\}.
    \label{eq:l1dual}
\end{align}
\cref{eq:l1dual} suggests that checking membership in $\Ksoslo^\ast$ amounts to checking positive definiteness of $2 (m - 1)$ matrices of side dimension $L$.
This membership check corresponds to a \emph{straightforward} LHSCB with parameter $2 L (m - 1)$ that is given by $\bfs \mapsto -\tsum{i \in \iin{2..m}} \logdet(\lam(\bfs_{1} + \bfs_{i}) \lam(\bfs_{1} - \bfs_{i}))$.
We now describe a membership check for $\Ksoslo^\ast$ that requires factorizations of only $m$ matrices, and corresponds to an LHSCB with parameter $L m$.

\begin{lemma}
\label{lemma:psdequiv}
The set $\{\bfX \in \bbS_{+}^L, \bfY \in \bbS^L: -\bfX \preceq \bfY \preceq \bfX \}$ is equal to $\K_{\ell_2}^2 = \cl \{\bfX \in \bbS_{++}^L, \bfY \in \bbS^L: \bfX - \bfY \bfX^{-1} \bfY \succ 0\}$.
\end{lemma}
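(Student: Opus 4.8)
The plan is to route everything through the $2L \times 2L$ block matrix $\mathbf{M}(\bfX, \bfY) := \begin{bmatrix} \bfX & \bfY \\ \bfY & \bfX \end{bmatrix}$ and to characterize its positive definiteness in two ways. On one hand, a congruence by the (orthogonal, hence invertible) matrix $\tfrac{1}{\sqrt 2}\begin{bmatrix} \mathbf{I} & \mathbf{I} \\ \mathbf{I} & -\mathbf{I}\end{bmatrix}$ sends $\mathbf{M}(\bfX,\bfY)$ to $\begin{bmatrix}\bfX+\bfY & 0 \\ 0 & \bfX-\bfY\end{bmatrix}$, so $\mathbf{M}(\bfX,\bfY) \succ 0$ if and only if $\bfX + \bfY \succ 0$ and $\bfX - \bfY \succ 0$, i.e.\ if and only if $-\bfX \prec \bfY \prec \bfX$. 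On the other hand, when $\bfX \succ 0$ the Schur complement criterion gives $\mathbf{M}(\bfX,\bfY) \succ 0$ if and only if $\bfX - \bfY\bfX^{-1}\bfY \succ 0$ (and if $\bfX$ is singular then $\mathbf{M}(\bfX,\bfY)$ cannot be positive definite). Equating the two descriptions, the strict set $\{\bfX \in \bbS_{++}^L,\ \bfY \in \bbS^L : \bfX - \bfY\bfX^{-1}\bfY \succ 0\}$ equals $\{\bfX,\bfY \in \bbS^L : -\bfX \prec \bfY \prec \bfX\}$; note that $\bfX \succ 0$ is automatic in the latter since $2\bfX = (\bfX+\bfY)+(\bfX-\bfY) \succ 0$.

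It then remains to take closures. Let $T : \bbS^L \times \bbS^L \to \bbS^L \times \bbS^L$ be the linear bijection $T(\bfX,\bfY) = (\bfX+\bfY,\, \bfX-\bfY)$. Then $\{-\bfX \preceq \bfY \preceq \bfX\} = T^{-1}(\bbS_+^L \times \bbS_+^L)$ is closed, $\{-\bfX \prec \bfY \prec \bfX\} = T^{-1}(\bbS_{++}^L \times \bbS_{++}^L)$, and since $\bbS_{++}^L$ is dense in $\bbS_+^L$ and $T$ is a homeomorphism, the closure of the strict set is the nonstrict set. Combining with the first paragraph,
\begin{align*}
\K_{\ell_2}^2 &= \cl\{\bfX \in \bbS_{++}^L, \bfY \in \bbS^L : \bfX - \bfY\bfX^{-1}\bfY \succ 0\} \\
&= \cl\{\bfX,\bfY \in \bbS^L : -\bfX \prec \bfY \prec \bfX\} = \{\bfX \in \bbS_+^L, \bfY \in \bbS^L : -\bfX \preceq \bfY \preceq \bfX\},
\end{align*}
which is the claim.

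I do not expect a genuine obstacle here; the one point requiring care is that $(\bfX,\bfY) \mapsto \bfX - \bfY\bfX^{-1}\bfY$ is undefined at singular $\bfX$, so one should not reason with that expression directly at the boundary — which is exactly why the block-matrix reformulation is the clean route, turning $\{-\bfX \preceq \bfY \preceq \bfX\}$ into a linear preimage of a product of PSD cones that is manifestly closed and equal to the closure of its interior. This lemma records that the symmetric-off-diagonal, $m = 2$ version of the block structure in \eqref{eq:blocksoc} degenerates to a rotated pair of PSD constraints, which matches the dual description \eqref{eq:l1dual} of $\Ksoslo^\ast$ and lets the SOS-L1 barrier analysis reuse the compatibility computation of \cref{thm:sobarr}.
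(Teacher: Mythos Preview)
Your proof is correct and in fact cleaner than the paper's. Both arguments pass through the block matrix $\mathbf{M}(\bfX,\bfY)=\begin{bsmallmatrix}\bfX & \bfY\\ \bfY & \bfX\end{bsmallmatrix}$, but the paper handles the two inclusions separately: for $\K_{\ell_2}^2 \subseteq \{-\bfX\preceq\bfY\preceq\bfX\}$ it tests $\mathbf{M}\succeq 0$ only against vectors of the form $(\bfv,\pm\bfv)$, and for the reverse inclusion it expands $(\bfY+\bfX)\bfX^{-1}(\bfX-\bfY)=\bfX-\bfY\bfX^{-1}\bfY$ and invokes an external result of Subramanian on the eigenvalues of such triple products. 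Your congruence by $\tfrac{1}{\sqrt 2}\begin{bsmallmatrix}\mathbf{I}&\mathbf{I}\\\mathbf{I}&-\mathbf{I}\end{bsmallmatrix}$ block-diagonalizes $\mathbf{M}$ to $\Diag(\bfX+\bfY,\bfX-\bfY)$ and thus yields the equivalence of the strict sets in one stroke, with no outside citation needed; the closure step via the linear bijection $T(\bfX,\bfY)=(\bfX+\bfY,\bfX-\bfY)$ is likewise tidier than the paper's. The trade-off is that the paper's product identity makes the role of the Schur complement more explicit, but your route is more self-contained and arguably the ``right'' argument for this lemma.
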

\begin{proof}
For inclusion in one direction:
\begin{subequations}
\begin{align}
    & \cl \{ \bfX \in \bbS_{++}^L, \bfY \in \bbS^L: \bfX - \bfY \bfX^{-1} \bfY \succ 0 \} 
    \\
    & = 
    \bigl\{ \bfX \in \bbS_{+}^L, \bfY \in \bbS^L:
    \begin{psmallmatrix} \bfX & \bfY \\ \bfY & \bfX \end{psmallmatrix} \succeq 0, \ \begin{psmallmatrix} \bfX & -\bfY \\ -\bfY & \bfX \end{psmallmatrix} \succeq 0
    \bigr\}
    \\
    & \subseteq
    \bigl\{ \bfX \in \bbS_{+}^L, \bfY \in \bbS^L: 2 \bfv^\top \bfX \bfv \pm 2 \bfv^\top \bfY \bfv \geq 0, \ \forall \bfv \in \bbR^L \bigr\}
    \\
    & = \{ \bfX \in \bbS_{+}^L, \bfY \in \bbS^L: \bfX + \bfY \succeq 0, \bfX - \bfY \succeq 0 \}
    .
\end{align}

\end{subequations}

For the other direction, suppose $-\bfX \prec \bfY \prec \bfX$.
Then $\bfX \succ 0$, $\bfY + \bfX \succ 0$, $\bfX - \bfY \succ 0$.
Note that $(\bfY + \bfX) \bfX^{-1} (\bfX - \bfY) = \bfX - \bfY \bfX^{-1} \bfY$ is symmetric.
Due to \citet[Corollary 1]{subramanian1979theorem}, this product of three matrices also has nonnegative eigenvalues.
We conclude that $-\bfX \prec \bfY \prec \bfX$ implies $\bfX \succ 0$ and $\bfX - \bfY \bfX^{-1} \bfY \succeq 0$.
Since $-\bfX \preceq \bfY \preceq \bfX = \cl \{ -\bfX \prec \bfY \prec \bfX \}$, taking closures gives the result.
\qed
\end{proof}

By \cref{lemma:psdequiv} we can write the dual cone as:
\begin{align}
    \Ksoslo^\ast = 
    \cl \left\{ 
    \begin{aligned}
    \bfs \in \bbR^{U \times m}: 
    \lam(\bfs_{1}) \succ 0,
    \\
    \lam(\bfs_{1}) - \lam(\bfs_{i}) \lam(\bfs_{1})^{-1} \lam(\bfs_{i}) \succ 0,
    \forall i \in \iin{2..m}
    \end{aligned}
    \right\}
    .
    \label{eq:soslodual}
\end{align}

\begin{theorem}
The function $F: \bbR^{U \times m} \to \bbR$ given by:
\begin{align}
\begin{split}
F(\bfs) &= -\tsum{i \in \iin{2..m}} \logdet(\lam(\bfs_1) - \lam(\bfs_i) \lam(\bfs_1)^{-1} \lam(\bfs_i)) - {}
\\
& \pheq \logdet(\lam(\bfs_1))
\end{split}
\end{align}
is an $L m$-LHSCB for $\Ksoslo^\ast$.
\end{theorem}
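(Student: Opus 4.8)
The plan is to realise $F$ as the composition of the $-\logdet$ barrier of a product of positive semidefinite cones with a linear map, mirroring the proof of \cref{thm:sobarr}. From the representation of $\Ksoslo^\ast$ displayed just above the theorem (obtained from \cref{eq:l1dual} via \cref{lemma:psdequiv}), $\Ksoslo^\ast$ is the inverse image, under the linear map $\Phi(\bfs) = (\lam(\bfs_1), \ldots, \lam(\bfs_m))$, of the cone
\begin{align}
\K_{\ell_1}^m = \cl \left\{ (\bfX_1, \ldots, \bfX_m) \in \bbS_{++}^L \times (\bbR^{L \times L})^{m-1} : \bfX_1 - \bfX_i \bfX_1^{-1} \bfX_i^\top \succ 0 \ \forall i \in \iin{2..m} \right\},
\end{align}
where enlarging the ambient space of the off-diagonal coordinates from $\bbS^L$ to $\bbR^{L \times L}$ is harmless because every $\lam(\bfs_i)$ is symmetric, so the preimage under $\Phi$ is unchanged. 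Moreover $F = \hat{F} \circ \Phi$ with $\hat{F}(\bfX_1, \ldots, \bfX_m) = -\tsum{i \in \iin{2..m}} \logdet(\bfX_1 - \bfX_i \bfX_1^{-1} \bfX_i^\top) - \logdet(\bfX_1)$, and the image of $\Phi$ meets $\intr(\K_{\ell_1}^m)$ (take $\bfs_1$ the coefficients of a positive constant and $\bfs_{i \in \iin{2..m}} = 0$), so $\Ksoslo^\ast$ is full-dimensional. Logarithmic homogeneity of $F$ with parameter $Lm$ is immediate from linearity of $\lam$: each of the $m-1$ Schur-complement terms is positively homogeneous of degree one in $\bfs$ and contributes $L$, and $-\logdet(\lam(\bfs_1))$ contributes a further $L$. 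Hence, by \citet[Proposition 5.1.1]{nesterov1994interior}, it suffices to show that $\hat{F}$ is an $Lm$-self-concordant barrier for $\K_{\ell_1}^m$.

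For this, put $\Gamma = \bbS_+^L \times (\bbR^{L \times L})^{m-1}$ and let $\bfG = (\bfG_2, \ldots, \bfG_m) : \intr(\Gamma) \to (\bbS^L)^{m-1}$ be given by $\bfG_i(\bfX_1, \ldots, \bfX_m) = \bfX_1 - \bfX_i \bfX_1^{-1} \bfX_i^\top$. The crux is to check that $\bfG$ is $\bigl( (\bbS_+^L)^{m-1}, 1 \bigr)$-compatible with $\Gamma$ in the sense of \citet[Definition 5.1.1]{nesterov1994interior}. Each component $\bfG_i$ depends only on $(\bfX_1, \bfX_i)$ and coincides with the map $\bfG$ analysed in the proof of \cref{thm:sobarr} restricted to two blocks; there it was verified to be $C^3$-smooth, concave with respect to $\bbS_+^L$, and to satisfy the third-derivative bound \cref{eq:defcompat} for every admissible direction, and that argument already decouples over $i$. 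Concavity and the third-derivative bound relative to the \emph{product} cone $(\bbS_+^L)^{m-1}$ are exactly the conjunction of these componentwise statements, so $\bfG$ is $\bigl( (\bbS_+^L)^{m-1}, 1 \bigr)$-compatible with $\Gamma$ with essentially no new computation.

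Finally, writing $\hat{F}(\bfX) = -\logdet(\bfX_1) + \tsum{i \in \iin{2..m}} \bigl( -\logdet(\bfG_i(\bfX)) \bigr)$ exhibits it as the sum of the $L$-logarithmically-homogeneous barrier $-\logdet(\bfX_1)$ of the $\bbS_+^L$ factor of $\Gamma$ (the $(\bbR^{L \times L})^{m-1}$ factors carry the zero barrier) and the standard $(m-1)L$-barrier of $(\bbS_+^L)^{m-1}$ precomposed with $\bfG$. By \citet[Proposition 5.1.7]{nesterov1994interior} together with the compatibility just established, $\hat{F}$ is a self-concordant barrier for $\cl \{ \bfX \in \intr(\Gamma) : \bfG(\bfX) \succ 0 \} = \K_{\ell_1}^m$ with parameter $L + (m-1)L = Lm$; composing with $\Phi$ and recalling the log-homogeneity above yields the claimed $Lm$-LHSCB for $\Ksoslo^\ast$. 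The one step that is genuinely new rather than bookkeeping is recognising that the compatibility estimate of \cref{thm:sobarr} persists when the outputs $\bfG_2, \ldots, \bfG_m$ are treated as a tuple in $(\bbS_+^L)^{m-1}$ instead of being summed into a single Schur complement — this is precisely what delivers parameter $Lm$ rather than the $2L(m-1)$ that would result from naively adding $m-1$ copies of the two-block barrier.
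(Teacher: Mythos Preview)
Your proposal is correct and follows essentially the same route as the paper: both introduce the auxiliary cone $\cl\{(\bfX_1,\ldots,\bfX_m)\in\bbS_{++}^L\times(\bbR^{L\times L})^{m-1}:\bfX_1-\bfX_i\bfX_1^{-1}\bfX_i^\top\succ 0\ \forall i\}$ (the paper names it $\K_{\ell_\infty}^m$), define the same $\hat{F}$, verify $((\bbS_+^L)^{m-1},1)$-compatibility of the tuple-valued Schur map by reducing componentwise to the $m=2$ case of \cref{thm:sobarr}, and invoke \citet[Proposition~5.1.7]{nesterov1994interior} before composing with the linear map $\bfs\mapsto(\lam(\bfs_1),\ldots,\lam(\bfs_m))$. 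Your added remarks on full-dimensionality and explicit log-homogeneity are sound housekeeping that the paper leaves implicit.
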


\begin{proof}
It is easy to verify that $F$ is a logarithmically homogeneous barrier, and we show it is an $L m$-self-concordant barrier.
As in \cref{thm:sobarr}, we define an auxiliary cone:
\begin{align}
    \K_{\ell_\infty}^m &= \{ (\bfX_1, \ldots, \bfX_m) \in \bbS_{+}^L \times (\bbR^{L \times L})^{m-1} : (\bfX_1, \bfX_i) \in \K_{\ell_2}^2 \forall i \in \iin{2..m}  \}.
\end{align}
Let $\hat{F}: \bbS_{++}^L \times (\bbR^{L \times L})^{m-1} \to \bbR$ be defined as $\hat{F}(\bfX_1, \ldots, \bfX_m) = -\tsum{i \in \iin{2..m}} \logdet(\bfX_1 - \bfX_i \bfX_1^{-1} \bfX_i^\top) - \logdet(\bfX_1)$.
We argue that $\hat{F}$ is an $L m$-self-concordant barrier for $\K_{\ell_\infty}^m$.
$F$ is a composition of $\hat{F}$ with the same linear map used in \cref{thm:sobarr} and self-concordance of $F$ then follows by the same reasoning.

Let $\Gamma = \bbS_{+}^L \times (\bbR^{L \times L})^{m-1}$ and $\bfH: \intr(\Gamma) \to (\bbS_{+}^L)^{m-1}$ be defined by:
\begin{align}
    \bfH(\bfX_1, \ldots, \bfX_m) = 
    \bigl(
    \bfX_1 - \bfX_2 \bfX_{1}^{-1} \bfX_2^\top,
    \hdots,
    \bfX_1 - \bfX_m \bfX_{1}^{-1} \bfX_m^\top
    \bigr).
\end{align}
We claim that $\bfH$ is $((\bbS_+^L)^{m-1}, 1)$-compatible with the domain $\Gamma$.
This amounts to showing that for all $i \in \iin{2..m}$, the mapping $\bfH_i: \bbS_{++}^L \times \bbR^{L \times L} \to \bbS^L$, $\bfH_i(\bfX) = \bfX_1 - \bfX_i \bfX_{1}^{-1} \bfX_i^\top$ is $(\bbS_+^L, 1)$-compatible with the domain $\bbS_{+}^L \times \bbR^{L \times L}$ (the requirements for compatibility decouple for each $i$).
The latter holds since $\bfH_i$ is equivalent to the function $\bfG$ from \cref{thm:sobarr} with $m=2$. 
Then by \citet[Lemma 5.1.7]{nesterov1994interior}, $\hat{F}$ is an $L m$-self-concordant barrier. 
\qed
\end{proof}
Note that we rely on an analogy of a representation for the $\ell_\infty$-norm cone (see \citep[Section 5.1]{coey2021solving}) in \cref{eq:soslodual}.
From this we derive an LHSCB that is analogous to the $\ell_\infty$-norm cone LHSCB.
On the other hand, we are not aware of an efficient LHSCB for its dual, the $\ell_1$-norm cone, so we cannot use the same technique to derive an LHSCB for the dual of a polynomial analogy to the $\ell_\infty$-norm cone.

\section{Implementation details} 
\label{sec:implementation}

In \crefrange{sec:implementation:sospsd}{sec:implementation:soslo} we describe the gradients and Hessians of the LHSCBs for $\Ksospsd$, $\Ksosso$, and $\Ksoslo$, which are required as oracles in an algorithm like \citet{coey2021solving}.
We give computational complexities of the Hessian oracles for each cone.
All the oracles we describe are implemented in the open-source solver Hypatia \citep{coey2021solving}.%
\footnote{Available at \url{https://github.com/chriscoey/Hypatia.jl}.}

\subsection{SOS-PSD}
\label{sec:implementation:sospsd}

To draw comparisons between $\Ksospsd$ and its SOS representation \cref{eq:scalarsospsd}, let us outline how we modify the representation of $\Ksos$ from \cref{sec:introduction:sospolynomials} to account for sparsity in a polynomial of the form $\bfy^\top \mathbf{Q}(\bfx) \bfy$. 

Suppose we have interpolation points $\bft_{i \in \iin{1..U}}$ to represent $\Ksos$ in $\bbR[\bfx]_{n,2d}$.
Let $\underline{\bft}_{i \in \iin{1..\sdim(m)}}$ represent distinct points in $\bbR^m$, where at most two components in $\underline{\bft}_{i}$ equal one and the rest equal zero, for all $i \in \iin{1..\sdim(m)}$.
We can check that the Cartesian product of $\bft_{i \in \iin{1..U}}$ and $\underline{\bft}_{i \in \iin{1..\sdim(m)}}$, given by $\{(\bft_1, \underline{\bft}_1), \ldots, (\bft_U, \underline{\bft}_{\sdim(m)}) \}$ gives $U \sdim(m)$ unisolvent points.
The polynomial $\bfy^\top \mathbf{Q}(\bfx) \bfy$ from \cref{eq:scalarsospsd} is then characterized by its evaluation at these these points.

Now let $\underline{p}_{i \in \iin{1..m}}$ be polynomials in $\bbR[\bfy]_{m,1}$ such that $\underline{p}_i(y_1, \ldots, y_m) = y_i$ for all $i \in \iin{1..m}$.
Recall that for $\Ksos$ in $\bbR[\bfx]_{n,2d}$, $\bfP$ is defined by $P_{u, \ell} = p_\ell(\mathbf{t}_u)$ for all $u \in \iin{1..U}, \ell \in \iin{1..L}$.
The new matrix $\underline{\bfP}$ for the $U \sdim(m)$-dimensional SOS cone is given by $\underline{\bfP}  = \mathbf{Y} \otimes_K \bfP$, where $\mathbf{Y} \in \bbR^{\sdim(m) \times m}$, is a Vandermonde matrix of the polynomials $\underline{p}_{i \in \iin{1..m}}$ and points $\underline{\bft}_{i \in \iin{1..\sdim{(m)}}}$.
Finally, the lifting operator $\underline{\lam}(\bfs) = \underline{\bfP}^\top \diag(\bfs) \underline{\bfP}$ is of the same form as $\lam$.

\begin{lemma}
Computing the Hessian of the LHSCB of $\Ksospsd$ requires $\cO(L U^2 m^3)$ time while the Hessian of the LHSCB in the SOS formulation requires $\cO(L U^2 m^5)$ time if $m < L < U$.
\end{lemma}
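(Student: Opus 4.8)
The plan is to write each barrier as a composition of $-\logdet$ with a linear map, record explicit gradient and Hessian formulas, count the dominant matrix operations, and collapse the resulting bounds using $m < L < U$.

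For the SOS formulation \cref{eq:scalarsospsd} the bound is essentially a substitution. By the construction in \cref{sec:implementation:sospsd}, this formulation is an ordinary SOS dual cone whose interpolation matrix is $\underline{\bfP} = \mathbf{Y} \otimes_K \bfP \in \bbR^{\underline{U} \times \underline{L}}$ with $\underline{U} = U \sdim(m)$ and $\underline{L} = L m$, and whose barrier is $\bfs \mapsto -\logdet(\underline{\bfP}^\top \Diag(\bfs) \underline{\bfP})$. Following \citet{papp2019sum}, the gradient of such a barrier is minus the diagonal, and its Hessian the entrywise square, of the single $\underline{U} \times \underline{U}$ matrix $\underline{\bfP} (\underline{\bfP}^\top \Diag(\bfs) \underline{\bfP})^{-1} \underline{\bfP}^\top$; forming this matrix---together with building and factorizing the $\underline{L} \times \underline{L}$ Gram matrix, which is cheaper since $\underline{L} \le \underline{U}$ (here $L < U$ forces $L m \le U \sdim(m)$)---costs $\cO(\underline{L}\, \underline{U}^2)$. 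Substituting $\underline{U} = \cO(U m^2)$ and $\underline{L} = L m$ gives $\cO(L m \cdot U^2 m^4) = \cO(L U^2 m^5)$.

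For $\Ksospsd$ I would instead exploit the block structure of $\lampsd$. Write $\mathbf{W} = \lampsd(\bfS)^{-1} \in \bbS^{Lm}$ in $m \times m$ blocks $\mathbf{W}_{a,b} \in \bbR^{L \times L}$, and set $\mathbf{R}_{a,b} = \bfP \mathbf{W}_{a,b} \bfP^\top \in \bbR^{U \times U}$. Parametrizing the symmetric coefficient tensor by the entries $S_{u,i,j}$ with $i \ge j$, linearity of $\lampsd$ shows that $\partial \lampsd(\bfS) / \partial S_{u,i,j}$ is the matrix with $\bfp_u \bfp_u^\top$ in its $(i,j)$ and $(j,i)$ blocks (only in its $(i,i)$ block when $i = j$) and zeros elsewhere, where $\bfp_u$ is the $u$th row of $\bfP$. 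The chain rule for $-\logdet$ of a linear map then gives $\partial F / \partial S_{u,i,j} = -c_{ij} [\mathbf{R}_{i,j}]_{u,u}$ and $\partial^2 F / \partial S_{u,i,j}\, \partial S_{v,k,l}$ equal to a sum of at most four terms of the form $\pm c\, [\mathbf{R}_{a,b}]_{u,v} [\mathbf{R}_{c,d}]_{u,v}$ with $a,b,c,d$ drawn from $\{i,j,k,l\}$, the factors $c_{ij}, c \in \{1,2\}$ recording the off-diagonal symmetry. The operation count is then: forming $\lampsd(\bfS)$ ($\cO(m^2)$ blocks $\bfP^\top \Diag(\cdot) \bfP$, $\cO(L^2 U m^2)$ in total); inverting the $Lm \times Lm$ matrix ($\cO(L^3 m^3)$); forming the $\cO(m^2)$ matrices $\mathbf{R}_{a,b}$ ($\cO(U^2 L m^2)$); and assembling the $(U \sdim(m)) \times (U \sdim(m))$ Hessian with $\cO(1)$ lookups into the $\mathbf{R}_{a,b}$ per entry ($\cO(U^2 m^4)$). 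Under $m < L < U$ each of $L^3 m^3$, $L^2 U m^2$, $U^2 L m^2$ and $U^2 m^4$ is $\cO(L U^2 m^3)$---the last because $m \le L$---which gives the claim.

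I expect the only real work to be in the $\Ksospsd$ part: differentiating $-\logdet(\lampsd(\bfS))$ twice while correctly tracking the symmetry multipliers arising from the $i \ne j$ versus $i = j$ cases and from $\bfS_{i,j} = \bfS_{j,i}$, and then verifying that each of the $\cO(U^2 m^4)$ Hessian entries really does reduce to a bounded number of lookups into the precomputed $\mathbf{R}_{a,b}$. The SOS-formulation bound needs nothing beyond recognizing \cref{eq:scalarsospsd} as a standard SOS dual barrier of the stated dimensions and quoting the $\cO(\underline{L}\, \underline{U}^2)$ cost.
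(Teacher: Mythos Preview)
Your proposal is correct and follows essentially the same route as the paper: your blocks $\mathbf{R}_{a,b} = \bfP(\lampsd(\bfS)^{-1})_{a,b}\bfP^\top$ are exactly the paper's $\mathbf{T}_{i,j}$, and both proofs reduce the Hessian entries to $\cO(1)$ products of entries of these precomputed $U\times U$ blocks, then count the cost of forming them. The only cosmetic difference is that the paper builds the blocks via a Cholesky factor and one large product $\mathbf{V}^\top\mathbf{V}$ with $\mathbf{V}=\mathbf{L}^{-1}(\mathbf{I}_m\otimes_K\bfP)$, whereas you form the explicit inverse and multiply block by block; both routes land inside $\cO(LU^2 m^3)$ under $m<L<U$.
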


\begin{proof}
Define $\mathbf{T}_{i,j}: \bbR^{U \times m \times m} \to \bbR^{U \times U}$ for all $i, j \in \iin{1..m}$:
\begin{align}
    \mathbf{T}_{i,j}(\bfS) = \mathbf{P} (\lampsd(\bfS)^{-1})_{i,j}\mathbf{P}^\top
    = \bigl( (\mathbf{I}_m \otimes_K \mathbf{P}) \lampsd(\bfS)^{-1} (\mathbf{I}_m \otimes_K \mathbf{P})^\top \bigr)_{i,j},
    \label{eq:defT}
\end{align}
where the indices ${i,j}$ reference a $U \times U$ submatrix.
For all $i, i', j, j' \in \iin{1..m}$, $u, u' \in \iin{1..U}$, the gradient and Hessian of the barrier are:%
\footnote{In practice we only store coefficients from the lower triangle of a polynomial matrix and account for this in the derivatives.}
\begin{align}
    \frac{d F}{d S_{i,j,u}} & = -\mathbf{T}_{i,j}(\bfS)_{u,u},
    \\
    \frac{d^2 F}{d S_{i,j,u} d S_{i', j',u'}} 
    &= \mathbf{T}_{i,j'}(\bfS)_{u,u'} \mathbf{T}_{j,i'}(\bfS)_{u,u'}
    .
    \label{eq:sdpgradhess}
\end{align}

The lifting operator $\lampsd$ can be computed blockwise in $\mathcal{O}(L^2 U m^2)$ operations, while $\underline{\lam}$ requires $\mathcal{O}(L^2 U m^4)$ operations.
To avoid computing the explicit inverse $\lampsd(\bfs)^{-1}$, we use a Cholesky factorization $\lampsd = \mathbf{L} \mathbf{L}^\top$ to form a block triangular matrix $\mathbf{V} = \mathbf{L}^{-1}(\mathbf{I}_m \otimes_K \bfP)^\top$ in $\mathcal{O}(L^2 U m^2)$ operations, while computing the larger $\underline{\bfV} = \underline{\mathbf{L}}^{-1} \underline{\bfP}^\top$ where $\underline{\lam}(\bfs) = \underline{\mathbf{L}} \underline{\mathbf{L}}^\top$ for the $\Ksos$ formulation requires $\mathcal{O}(L^2 U m^4)$ operations.
We use the product $\mathbf{V}^\top \mathbf{V}$ to build $\mathbf{T}_{i,j}$ for all $i, j \in \iin{1..m}$ in $\mathcal{O}(L U^2 m^3)$ operations, while calculating $\underline{\bfV}^\top \underline{\bfV}$ requires $\mathcal{O}(L U^2 m^5)$ operations.
Once the blocks $\bfT_{i,j}$ are built, the time complexity to compute the gradient and Hessian are the same for $\Ksospsd$ as for $\Ksos$.
\qed
\end{proof}

\subsection{SOS-L2}
\label{sec:implementation:sosso}

\begin{lemma}
The Hessian of the LHSCB of $\Ksosso$ requires $\cO(L U^2 m^2)$ time while the Hessian of the LHSCB in the SOS formulation requires $\cO(L U^2 m^5)$ time if $m < L < U$.
\end{lemma}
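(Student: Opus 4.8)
The plan is to mirror the proof of the preceding lemma for $\Ksospsd$, taking advantage of the fact that the $2L$-LHSCB $F$ of \cref{eq:sosbarr} can be written as $F(\bfs) = -\logdet(\lamso(\bfs)) + (m-2)\logdet(\lam(\bfs_1))$, that is, as the ``straightforward'' barrier $-\logdet(\lamso(\bfs))$ plus a correction depending only on the first component $\bfs_1$. Since $\lamso$ is linear in $\bfs$ and $\lamso(\bfs)$ has block arrowhead structure, I would first record the closed form of $\lamso(\bfs)^{-1}$ obtained from the Schur complement $\bfPi(\bfs)$ of \cref{eq:pi} via the block matrix inversion formula: the $(1,1)$ block is $\bfPi(\bfs)^{-1}$, the $(1,j)$ and $(j,1)$ blocks for $j \in \iin{2..m}$ are $-\bfPi(\bfs)^{-1}\lam(\bfs_j)\lam(\bfs_1)^{-1}$ and its transpose, and the $(i,j)$ block for $i,j \in \iin{2..m}$ is $\delta_{ij}\lam(\bfs_1)^{-1} + \lam(\bfs_1)^{-1}\lam(\bfs_i)\bfPi(\bfs)^{-1}\lam(\bfs_j)\lam(\bfs_1)^{-1}$. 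Defining the direct analogs of the matrices $\mathbf{T}_{i,j}$ from the preceding lemma, $\bfT_{i,j}(\bfs) = \mathbf{P}(\lamso(\bfs)^{-1})_{i,j}\mathbf{P}^\top \in \bbR^{U \times U}$, the above shows each $\bfT_{i,j}(\bfs)$ is an explicit product formed from $\mathbf{P}$, $\lam(\bfs_1)^{-1}$, $\bfPi(\bfs)^{-1}$, and the $L \times L$ matrices $\lam(\bfs_1)^{-1}\lam(\bfs_i)$.

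Next I would differentiate $F$ in the coefficients $s_{a,u}$ (component $a \in \iin{1..m}$, interpolation point $u \in \iin{1..U}$). Because $\lamso$ is affine in $\bfs$, the Hessian of $-\logdet(\lamso(\bfs))$ carries no second-order term and each of its $\cO(m^2)$ blocks of size $U \times U$ is a sum of entrywise (Hadamard) products of the matrices $\bfT_{i,j}(\bfs)$: the $(1,1)$ block equals $\tsum{i,j \in \iin{1..m}} \bfT_{i,j}(\bfs) \odot \bfT_{i,j}(\bfs)$, each $(1,k)$ block with $k \in \iin{2..m}$ is a sum of $\cO(m)$ such products, and each $(k,l)$ block with $k,l \in \iin{2..m}$ is a single such product (up to transposition); the correction term contributes only $-(m-2)(\mathbf{P}\lam(\bfs_1)^{-1}\mathbf{P}^\top) \odot (\mathbf{P}\lam(\bfs_1)^{-1}\mathbf{P}^\top)$ to the $(1,1)$ block, and the gradient is read off from the diagonals of the $\bfT_{i,j}(\bfs)$. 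The key structural point to emphasize, in contrast with $\Ksospsd$, is that the barrier variable here is only $Um$-dimensional, so the Hessian assembly reduces to taking $\cO(m^2)$ entrywise products of $U \times U$ matrices (the double sum occurring only in the $(1,1)$ block).

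Then I would count flops, assuming $m < L < U$. Forming $\lam(\bfs_i)$ for all $i$ costs $\cO(L^2 U m)$; the Cholesky factorization of $\lam(\bfs_1)$, the $L \times L$ products $\lam(\bfs_1)^{-1}\lam(\bfs_i)$, the Schur complement $\bfPi(\bfs)$, and its Cholesky factorization cost $\cO(L^3 m)$; forming the $\cO(m^2)$ distinct matrices $\bfT_{i,j}(\bfs)$ — for instance by precomputing $\bfW_i := \mathbf{P}\lam(\bfs_1)^{-1}\lam(\bfs_i) \in \bbR^{U \times L}$ and taking products $\bfW_i \bfPi(\bfs)^{-1} \bfW_j^\top$ through triangular solves with the precomputed Cholesky factors — costs $\cO(L U^2 m^2)$; and assembling the gradient and Hessian from entrywise products costs $\cO(U^2 m^2)$. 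Under $m < L < U$ the dominant term is $\cO(L U^2 m^2)$, the cost of forming the $\bfT_{i,j}(\bfs)$. For the SOS formulation \cref{eq:arrowpsd}, observe that it is exactly the $\Ksospsd$ formulation \cref{eq:scalarsospsd} applied to the $m \times m$ polynomial matrix $\arrow(\bfq(\bfx))$, so the cost $\cO(L U^2 m^5)$ for its LHSCB Hessian follows directly from the preceding lemma.

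The main obstacle I expect is the bookkeeping in the differentiation step: checking that every block of the Hessian of $-\logdet(\lamso(\bfs))$ truly collapses to at most $\cO(m)$ entrywise products of the $\cO(m^2)$ matrices $\bfT_{i,j}(\bfs)$ — with the $\tsum{i,j}$ double sum confined to the $(1,1)$ block — rather than hiding an extra factor of $m$ per block, and verifying that each $\bfT_{i,j}(\bfs)$ really can be produced in $\cO(L U^2)$ operations by reusing the two Cholesky factorizations and the shared $L \times L$ matrices $\lam(\bfs_1)^{-1}\lam(\bfs_i)$, so that no additional factor of $m$ or $L$ enters the leading term.
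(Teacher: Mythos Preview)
Your approach is essentially identical to the paper's: it too defines $\bfT_{i,j}(\bfs) = \mathbf{P}(\lamso(\bfs)^{-1})_{i,j}\mathbf{P}^\top$ via the block-arrowhead inverse, records the same blockwise Hessian formulas (with the $-(m-2)\mathbf{R}\odot\mathbf{R}$ correction for $\mathbf{R} = \mathbf{P}\lam(\bfs_1)^{-1}\mathbf{P}^\top$), identifies forming all $\bfT_{i,j}$ as the $\cO(LU^2m^2)$ bottleneck, and invokes the preceding lemma for the $\cO(LU^2m^5)$ SOS cost. One small slip: the $(k,l)$ Hessian block for $k,l \in \iin{2..m}$ is actually a sum of \emph{two} Hadamard products, $2(\bfT_{1,1}\odot\bfT_{k,l} + \bfT_{k,1}\odot\bfT_{1,l})$, not one, but this does not affect your complexity count.
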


\begin{proof}
Let $\mathbf{T}_{i,j}: \bbR^{U \times m} \to \bbR^{U \times U}$ be defined as in \cref{eq:defT} for all $i, j \in \iin{1..m}$, but replacing $\lampsd$ with $\lamso$.
Let $\mathbf{R} = \mathbf{P} (\lam(\bfs_1))^{-1} \mathbf{P}^\top$.
For all $i, i' \in \iin{1..m}$, $u, u' \in \iin{1..U}$, the gradient and Hessian of the barrier are:
\begin{align}
    \frac{d F}{d s_{i,u}}  
    &= \begin{cases}
    -\tsum{j \in \iin{1..m}} \mathbf{T}_{j,j}(\bfs)_{u,u} + (m - 2)\mathbf{R}_{u,u} & i = 1 \\
    -2 \mathbf{T}_{i,1}(\bfs)_{u,u}  & i \neq 1,
    \end{cases}
    \\
\begin{split}
    \frac{d^2 F}{d s_{i,u} d s_{i',u'}} & =
    \begin{cases}
    \tsum{j \in \iin{1..m}, k \in \iin{1..m}} ( \mathbf{T}_{j,k}(\bfs)_{u,u'} )^{2} - (m - 2) ( \mathbf{R}_{u,u'} )^{2} & i = i' = 1 \\
    2 \tsum{j \in \iin{1..m}} \mathbf{T}_{j,1}(\bfs)_{u,u'} \mathbf{T}_{j,i'}(\bfs)_{u,u'} & i = 1, i'\neq 1
    \\
    2 \tsum{j \in \iin{1..m}}  \mathbf{T}_{1,j}(\bfs)_{u,u'}  \mathbf{T}_{i,j}(\bfs)_{u,u'} & i \neq 1, i'= 1
    \\
    2 ( \mathbf{T}_{1,1}(\bfs)_{u,u'} \mathbf{T}_{i,i'}(\bfs)_{u,u'} + \mathbf{T}_{i,1}(\bfs)_{u,u'} \mathbf{T}_{1,i'}(\bfs)_{u,u'} ) & i \neq 1, i' \neq 1
    .
    \end{cases}
    \end{split}
    \label{eq:sogradhess}
\end{align}
To compute the blocks $\mathbf{T}_{i,j}(\bfs)$ we require an inverse of the matrix $\lamso(\bfs)$. It can be verified that:
\begin{align}
    \lamso(\bfs)^{-1}
     = 
     \begin{bmatrix}
     \mathbf{0} & \\
       & \mathbf{I}_{m-1} \otimes_K \lam(\bfs_1)^{-1}
    \end{bmatrix}
    +
    \mathbf{U} {\bfPi}(\bfs)^{-1} \mathbf{U}^\top,
    \label{eq:blockarrowinv}
\end{align}
where,
\begin{align*}
    \mathbf{U}^\top &= 
    \begin{bmatrix}
    -\mathbf{I}_L &
    \lam(\bfs_2) \lam(\bfs_1)^{-1} &
    \lam(\bfs_3) \lam(\bfs_1)^{-1} &
    \ldots &
    \lam(\bfs_m) \lam(\bfs_1)^{-1}
    \end{bmatrix}.
\end{align*}
Computing $\bfT_{i,j}$ for all $i, j \in \iin{1..m}$ is the most expensive step in obtaining the Hessian and we do this in $\cO(L U^2 m^2)$ operations.
The complexity of computing the Hessian in the SOS formulation of $\K_{\arrow \SOSpsd}$ is the same as in the SOS formulation of $\Ksospsd$ since the cones have the same dimension.
\qed
\end{proof}

\subsection{SOS-L1}
\label{sec:implementation:soslo}

\begin{lemma}
The Hessians of the LHSCBs of $\Ksoslo$ and its SOS formulation require $\cO(L U^2 m)$ time if $m < L < U$.
\end{lemma}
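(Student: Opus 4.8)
The plan is to mirror the two preceding implementation lemmas. First I would record the gradient and Hessian of the barrier $F(\bfs) = -\tsum{i \in \iin{2..m}} \logdet(\bfPi_i(\bfs)) - \logdet(\lam(\bfs_1))$, where $\bfPi_i(\bfs) = \lam(\bfs_1) - \lam(\bfs_i)\lam(\bfs_1)^{-1}\lam(\bfs_i)$ is the $i$th Schur complement appearing in the theorem above. Since the $i$th summand involves only the slices $\bfs_1$ and $\bfs_i$, and the extra $-\logdet(\lam(\bfs_1))$ term touches only $\bfs_1$, the Hessian — written in block form indexed by $(i,u)$ and $(i',u')$ with $i,i' \in \iin{1..m}$, $u,u' \in \iin{1..U}$ — has every block with $i \neq i'$ and $i,i' \geq 2$ equal to zero. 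Only the $(1,1)$ block, the $2(m-1)$ blocks $(1,i)$, $(i,1)$, and the $m-1$ diagonal blocks $(i,i)$ are nonzero, i.e. $\cO(m)$ blocks of size $U \times U$. Each summand is, up to the $\logdet(\lam(\bfs_1))$ correction, the $\K_{\ell_2}^2$ barrier of \cref{lemma:psdequiv} precomposed with the linear map $\bfs \mapsto (\lam(\bfs_1), \lam(\bfs_i))$, so differentiating it produces entries that are sums of products $\bfT(\bfs)_{u,u'}\,\bfT'(\bfs)_{u,u'}$, where each $\bfT$ is a $U \times U$ matrix of the type in \cref{eq:defT}, namely $\bfP\,\mathbf{C}^{-1}\bfP^\top$ (or a mixed product also involving $\lam(\bfs_i)\lam(\bfs_1)^{-1}$) with $\mathbf{C}$ one of $\lam(\bfs_1)$ or $\bfPi_i(\bfs)$.

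Next I would do the flop count. For each $i \in \iin{2..m}$ one forms $\bfPi_i(\bfs)$ and a Cholesky factor in $\cO(L^3)$, then obtains the needed $U \times U$ matrices by a triangular solve of $\bfP^\top$ against that factor ($\cO(L^2 U)$) followed by forming the products ($\cO(L U^2)$); with $m < L < U$ each index costs $\cO(L U^2)$, and together with the single matrix $\bfR = \bfP\,\lam(\bfs_1)^{-1}\bfP^\top$ this is $\cO(L U^2 m)$. Assembling the $\cO(m)$ nonzero Hessian blocks from these matrices costs $\cO(U^2)$ per block, with the $(1,1)$ block (a sum over $i \in \iin{2..m}$) costing $\cO(U^2 m)$, so assembly is also $\cO(L U^2 m)$. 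For the SOS formulation \cref{eq:extl1}, the barrier is the sum of the $\Ksos^\ast$ barriers $-\logdet(\lam(\cdot))$ over its $2m - 1$ SOS cones (the $U(m-1)$ linear equalities contribute nothing to the barrier), and each such Hessian is computable in $\cO(L U^2)$ by the result of \citet{papp2019sum} recalled in \cref{sec:introduction:sospolynomials}; summing over the $2m-1$ cones gives $\cO(L U^2 m)$ as well.

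The only real obstacle is the first step: correctly deriving the Hessian of $F$ and confirming its block-sparsity pattern. This is routine given the near-identical computation already carried out for $\Ksosso$ in \cref{sec:implementation:sosso} — the difference being that here each off-diagonal slice $\bfs_i$ interacts with $\bfs_1$ through its own independent Schur complement $\bfPi_i(\bfs)$ rather than through a single shared one — so once the formula and the auxiliary $\bfT$-matrices are in hand the accounting is immediate from $m < L < U$.
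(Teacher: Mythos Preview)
Your proposal is correct and follows essentially the same approach as the paper: exploit that each summand of $F$ couples only $\bfs_1$ and a single $\bfs_i$, derive the block-arrowhead Hessian, and count $\cO(L U^2)$ per index for the $U \times U$ auxiliary matrices; the SOS formulation is handled identically by summing $2m-1$ SOS Hessians. The only cosmetic difference is that the paper packages each pair $(\bfs_1,\bfs_i)$ through the $2L \times 2L$ matrix $\lamso((\bfs_1,\bfs_i))$ and reads the needed $\bfT_{i,1},\bfT_{i,2}$ from blocks of its inverse (via the block-arrowhead inverse formula already recorded for $\Ksosso$), whereas you work directly with the Schur complements $\bfPi_i(\bfs)$; these are equivalent bookkeeping choices.
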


\begin{proof}
Let $\mathbf{T}_{i,j}(\bfs): \bbR^{U \times m} \to \bbS^U$ for all $i \in \iin{2..m}, j \in \{ 1, 2 \}$ be defined by:
\begin{align}
    \mathbf{T}_{i,j}(\bfs) &= \mathbf{P} (\lamso((s_1, s_i))^{-1} )_{1, j} \mathbf{P}^\top  
    \\
    & = \bigl( (\mathbf{I}_m \otimes_K \mathbf{P}) \lamso((s_1, s_i))^{-1} (\mathbf{I}_m \otimes_K \mathbf{P})^\top \bigr)_{1,j}
    .
    \label{eq:defl1T}
\end{align}
For all $i, i' \in \iin{1..m}$, $u, u' \in \iin{1..U}$, the gradient and Hessian of the barrier are:
\begin{align}
    \frac{d F}{d s_{i,u}} 
    &= 
    \begin{cases}
    -2 \tsum{j \in \iin{2..m}} \mathbf{T}_{j,1}(\bfs)_{u,u} + (m - 2) \mathbf{R}_{u,u} & i = 1 \\
    - 2 \mathbf{T}_{i,2}(\bfs)_{u,u}  & i \neq 1,
    \end{cases}
    \\
    \frac{d^2 F}{d s_{i,u} d s_{i',u'}}
    &= 
    \begin{cases}
    2 \tsum{j \in \iin{2..m}, k \in \{1, 2 \}} ( \mathbf{T}_{j,k}(\bfs)_{u,u'} )^{2} - (m - 2) (\mathbf{R}_{u,u'})^{2} & i = i' = 1 
    \\
    4 \mathbf{T}_{i,1}(\bfs)_{u,u'} \mathbf{T}_{i,2}(\bfs)_{u,u'} & i \neq 1, i' = 1
    \\
    4 \mathbf{T}_{i',1}(\bfs)_{u,u'} \mathbf{T}_{i',2}(\bfs)_{u,u'} & i = 1, i' \neq 1
    \\
    2 \tsum{k \in \{1, 2 \}} (\mathbf{T}_{i,k}(\bfs)_{u,u'})^{2} & i = i' \neq 1 \\
    0 & \text{otherwise}.
    \end{cases}
    \label{eq:l1gradhess}
\end{align}
Calculating $\bfT_{i,j}(\bfs)$  for all $i \in \iin{2..m}, j \in \{ 1, 2 \}$ can be done in $\cO(L U^2 m)$ operations.
The Hessian of the SOS formulation requires computing $\cO(m)$ Hessians of SOS cones that require $\cO(LU^2)$ time.
We use the block arrowhead structure of the Hessian when applying its inverse similarly to \cref{eq:blockarrowinv}.
\qed
\end{proof}

\section{Numerical example}
\label{sec:experiments}

For each cone $(\Ksospsd, \Ksosso, \Ksoslo)$ we compare the computational time to solve a simple example with its SOS formulation from \cref{sec:nonlinear}. 
We use an example analogous to the polynomial envelope problem from \citep[Section 7.2]{papp2019sum}, but replace the nonnegativity constraint by a conic inequality.
Let $q_{i \in \iin{2..m}}(\bfx)$ be randomly generated polynomials in $\bbR_{n,2d_r}[\bfx]$.
We seek a polynomial that gives the tightest approximation to the $\ell_1$ or $\ell_2$ norm of $( q_2(\bfx), \ldots, q_{m}(\bfx) )$ for all $\bfx \in [-1, 1]^n$:
\begin{subequations}
\begin{align}
    \min_{q_1(\bfx) \in \bbR_{n,2d}[\bfx]} \int_{[-1, 1]^n} q_1(\bfx) d \bfx & :
    \\
    q_1(\bfx) & \geq \vert \vert (q_2(\bfx), \ldots, q_{m}(\bfx)) \vert \vert_p & \forall \bfx \in [-1, 1]^n
    ,
    \label{eq:polyepigraph:norm}
\end{align}
\label{eq:polyepigraph}
\end{subequations}
with $p \in \{1,2\}$ in \cref{eq:polyepigraph:norm}.

To restrict \cref{eq:polyepigraph:norm} over $[-1, 1]^n$, we use \emph{weighted sum of squares} (WSOS) formulations.
A polynomial $q(\bfx)$ is WSOS with respect to weights $g_{i \in \iin{1..K}}(\bfx)$ if it can be expressed in the form of $q(\bfx) = \tsum{i \in \iin{1..K}} g_i(\bfx) p_i(\bfx)$, where $p_{i \in \iin{1..K}}(\bfx)$ are SOS.
\citet[Section 6]{papp2019sum} show that the dual WSOS cone (we will write $\Kwsos^\ast$) may be represented by an intersection of $\Ksos^\ast$ cones.
We represent the dual \emph{weighted} cones $\Kwsospsd^\ast$, $\Kwsosso^\ast$ and $\K_{\WSOS \ell_1}^\ast$ analogously using intersections of $\Ksospsd^\ast$, $\Ksosso^\ast$ and $\K_{\SOS \ell_1}^\ast$ respectively.

Let $\bff_{i \in \iin{1..m}}$ denote the coefficients of $q_{i \in \iin{1..m}}(\bfx)$ and let $\mathbf{w} \in \bbR^U$ be a vector of quadrature weights on $[-1, 1]^n$.
A low dimensional representation of \cref{eq:polyepigraph} may be written as:
\begin{align}
    \min_{\bff_1 \in \bbR^U} \mathbf{w}^\top \bff_1 & :
    \quad
    (\bff_1, \ldots, \bff_{m}) \in \K,
    \label{eq:envelope}
\end{align}
where $\K$ is  $\Kwsosso$ or $\Kwsoslo$.
If $p=2$, we compare the $\Kwsosso$ formulation with two alternative formulations involving $\K_{\arrow \SOSpsd}$.
We use either $\Kwsospsd$ to model $\K_{\arrow \SOSpsd}$ as implied in \cref{eq:arrowpsdpsd}, or $\Kwsos$ as in \cref{eq:arrowpsd}.
For $p=1$, we build an SOS formulation by replacing \eqref{eq:envelope} with:
\begin{subequations}
\begin{align}
    \min_{\bff_1, \bfg_2, \ldots, \bfg_{m}, \bfh_2, \ldots, \bfh_{m} \in \bbR^U} \mathbf{w}^\top \bff_1 & :
    \\
    \bff_1 - \tsum{i \in \iin{2..m}} (\bfg_i + \bfh_i) & \in \Kwsos,
    \\
    \bff_i - \bfg_i + \bfh_i &= 0, \quad \bfg_i, \bfh_i \in \Kwsos & \forall i \in  \iin{2..m}.
\end{align}
\label{eq:envelope:l1ext}
\end{subequations}

We select interpolation points using a heuristic adapted from \citep{papp2019sum,sommariva2009computing}.
We uniformly sample $N$ interpolation points, where $N \gg U$.
We form a Vandermonde matrix of the same structure as the matrix $\bfP$ used to construct the lifting operator, but using the $N$ sampled points for rows.
We perform a QR factorization and use the first $U$ indices from the permutation vector of the factorization to select $U$ out of $N$ rows to keep.

All experiments are performed on hardware with an AMD Ryzen 9 3950X 16-Core Processor (32 threads) and 128GB of RAM, running Ubuntu 20.10, and Julia 1.8 \citep{bezanson2017julia}.
Optimization models are built using JuMP \citep{LubinDunningIJOC} and solved with Hypatia 0.5.3 \citep{coey2021solving} 
using our specialized, predefined cones.
Scripts we use to run our experiments and raw results are available in the Hypatia repository.%
\footnote{Instructions to repeat our experiments are at \url{https://github.com/chriscoey/Hypatia.jl/tree/master/benchmarks/natvsext}.}
We use default settings in Hypatia and set relative optimality and feasibility tolerances to $10^{-7}$.

In \cref{tab:envelope,tab:norml1:master}, we show Hypatia's termination status, number of iterations, and solve times for $n \in \{1,4\}$ and varying values of $d_r$ and $m$.
We use symbols to represent the termination status, which are described in \cref{sec:results}.
If $p = 1$, we let $d = d_r$, where the maximum degree of $q_1(\bfx)$ is $2 d$.
If $p = 2$, we vary $d \in \{d_r, 2 d_r \}$ and add an additional column \emph{obj} in \cref{tab:envelope} to show the ratio of the objective value under the $\Kwsos$ (or equivalently $\Kwsospsd$) formulation divided by the objective value under the $\Kwsosso$ formulation.
Note that in our setup, the dimension of $\Kwsosso$ only depends on $d$.
A more flexible implementation could allow polynomial components to have different degrees in $\Kwsosso$ for the $d = 2 d_r$ case.

For $p=2$ and $d = 2 d_r$, the difference in objective values between $\Kwsosso$ and alternative formulations is less than $1\%$ across all converged instances.
For $p=2$ and  $d = d_r$, the difference in the objective values is around $10$--$43\%$ across converged instances.
However, the solve times for $\Kwsosso$ with $d = 2 d_r$ are sometimes faster than the solve times of alternative formulations with $d = d_r$ and equal values of $n$, $m$, and $d_r$.
This suggests that it may be beneficial to use $\Kwsosso$ in place of SOS formulations, but with higher maximum degree in the $\Kwsosso$ cone.
The solve times using $\Kwsospsd$ are slightly faster than the solve times using $\Kwsos$.
For the case where $p=1$, the $\Kwsoslo$ formulation is faster than the $\Kwsos$ formulation, particularly for larger values of $m$.
We also observe that the number of iterations the algorithm takes for $\Kwsosso$ compared to alternative formulations varies, but larger for $\Kwsoslo$ compared to the alternative SOS formulation.

\section{Conclusions}
\label{sec:conclusions}

SOS generalizations of PSD, $\ell_2$-norm and $\ell_1$-norm constraints can be modeled using specialized cones that are simple to use in a generic interior point algorithm.
The characterizations of $\Ksospsd$ and $\Ksosso$ rely on ideas from \citet{papp2013semidefinite} as well as the use of a Lagrange polynomial basis for efficient oracles in the multivariate case.
For the $\Ksospsd$ barrier, the complexity of evaluating the Hessian is reduced by a factor of $\cO(m^2)$ from the SOS formulation barrier.
This does not result in significant speed improvements since Hessian evaluations are not the bottleneck in an interior point algorithm.
In contrast, the dimension and barrier parameter of the $\Ksosso$ and $\Ksoslo$ cones are lower compared to their SOS formulations, and the complexity of evaluating the Hessian of the $\Ksosso$ barrier is reduced by a factor of $\cO(m^3)$ from its SOS formulation.
For both $\Ksosso$ and $\Ksoslo$, the total solve time was generally lower compared to their SOS formulations.
While there is no penalty in using lower dimensional representations of SOS-L1 constraints, SOS-L2 formulations give rise to more conservative restrictions than higher dimensional SOS formulations, which is observable in practice.

\appendix
\section{Result tables}
\label{sec:results}

The termination status (\emph{st}) columns of \cref{tab:envelope,tab:norml1:master} use the following codes to classify solve runs: 
\begin{description}[font=\normalfont\itshape]
\item[co] the solver claims the primal-dual certificate returned is optimal given its numerical tolerances,
\item[tl] a limit of 1800 seconds is reached,
\item[rl] a limit of approximately $120$GB of RAM is reached,
\item[sp] the solver terminates due to slow progress during iterations,
\item[er] the solver reports a different numerical error,
\item[sk] we skip the instance because the solver reached a time or RAM limit on a smaller instance.
\end{description}

\begin{table}[!htb]
\sisetup{
table-text-alignment = right,
table-auto-round,
table-figures-integer = 4,
table-figures-decimal = 1,
table-format = 4.1,
add-decimal-zero = false,
add-integer-zero = false,
}
\footnotesize
\begin{tabular}{rrrrrrSrrSrrSS[table-format = 2.2]}
\toprule
 &  & & & \multicolumn{3}{l}{$\Ksosso$} & \multicolumn{3}{l}{$\Ksos$} & \multicolumn{3}{l}{$\Ksospsd$} & 
\\
\cmidrule(lr){5-7} \cmidrule(lr){8-10} \cmidrule(lr){11-13} 
$n$ & $d_r$ & $m$ & $d$ & {st} & {iter} & {time} & {st} & {iter} & {time} & {st} & {iter} & {time} & {obj} \\
\midrule
\multirow{20}{*}{1}
 & \multirow{10}{*}{20}
  & \multirow{2}{*}{4}
  & 20 & co & 13 & .05 & co & 17 & .44 & co & 13 & .21 & .89 \\
 & & & 40 & co & 16 & .21 & co & 19 & 1.8 & co & 15 & 1.1 & .99 \\
 & & \multirow{2}{*}{8}
  & 20 & co & 13 & .12 & co & 17 & 2.9 & co & 14 & 2.1 & .85 \\
 & & & 40 & co & 19 & .70 & co & 21 & 18. & co & 16 & 10. & 1.0 \\
 & & \multirow{2}{*}{16}
  & 20 & co & 14 & .39 & co & 19 & 48. & co & 14 & 27. & .80 \\
 & & & 40 & co & 21 & 2.4 & co & 20 & 264. & co & 17 & 188. & 1.0 \\
 & & \multirow{2}{*}{32}
  & 20 & co & 15 & 1.6 & co & 22 & 1189. & co & 17 & 843. & .78 \\
 & & & 40 & co & 23 & 13. & tl & 3 & 2033. & tl & 7 & 2075. & .03 \\
 & & \multirow{2}{*}{64}
  & 20 & co & 17 & 8.5 & rl & $\ast$ & $\ast$ & rl & $\ast$ & $\ast$ & $\ast$ \\
 & & & 40 & co & 20 & 59. & sk & $\ast$ & $\ast$ & sk & $\ast$ & $\ast$ & $\ast$ \\
\cmidrule(lr){2-14}
 & \multirow{10}{*}{40}
  & \multirow{2}{*}{4}
  & 40 & co & 14 & .17 & co & 17 & 1.4 & co & 14 & 1.0 & .89 \\
 & & & 80 & co & 19 & 1.0 & co & 19 & 7.7 & co & 17 & 6.2 & .99 \\
 & & \multirow{2}{*}{8}
  & 40 & co & 16 & .57 & co & 19 & 15. & co & 15 & 9.1 & .82 \\
 & & & 80 & co & 21 & 3.1 & co & 21 & 93. & co & 17 & 62. & 1.0 \\
 & & \multirow{2}{*}{16}
  & 40 & co & 17 & 2.0 & co & 20 & 246. & co & 16 & 152. & .79 \\
 & & & 80 & co & 27 & 13. & co & 21 & 1737. & co & 18 & 1206. & 1.0 \\
 & & \multirow{2}{*}{32}
  & 40 & co & 18 & 7.6 & tl & 3 & 2031. & tl & 8 & 1803. & .02 \\
 & & & 80 & co & 27 & 53. & rl & $\ast$ & $\ast$ & rl & $\ast$ & $\ast$ & $\ast$ \\
 & & \multirow{2}{*}{64}
  & 40 & co & 19 & 36. & sk & $\ast$ & $\ast$ & sk & $\ast$ & $\ast$ & $\ast$ \\
 & & & 80 & co & 26 & 226. & sk & $\ast$ & $\ast$ & sk & $\ast$ & $\ast$ & $\ast$ \\
\cmidrule(lr){1-14}
\multirow{20}{*}{4}
 & \multirow{8}{*}{2}
  & \multirow{2}{*}{4}
  & 2 & co & 13 & .15 & co & 18 & .91 & co & 15 & .59 & .75 \\
 & & & 4 & co & 21 & 33. & co & 43 & 133. & co & 37 & 97. & 1.0 \\
 & & \multirow{2}{*}{8}
  & 2 & co & 13 & .41 & co & 21 & 11. & co & 18 & 7.7 & .64 \\
 & & & 4 & co & 21 & 102. & tl & 49 & 1816. & tl & 60 & 1811. & 1.0 \\
 & & \multirow{2}{*}{16}
  & 2 & co & 15 & 2.3 & co & 30 & 242. & co & 25 & 203. & .59 \\
 & & & 4 & co & 21 & 437. & sk & $\ast$ & $\ast$ & sk & $\ast$ & $\ast$ & $\ast$ \\
 & & \multirow{2}{*}{32}
  & 2 & co & 15 & 10. & tl & 6 & 1848. & tl & 10 & 1972. & 15. \\
 & & & 4 & co & 22 & 1707. & sk & $\ast$ & $\ast$ & sk & $\ast$ & $\ast$ & $\ast$ \\
 & & \multirow{2}{*}{64}
  & 2 & co & 15 & 46. & sk & $\ast$ & $\ast$ & sk & $\ast$ & $\ast$ & $\ast$ \\
 & & & 4 & tl & 10 & 1935. & sk & $\ast$ & $\ast$ & sk & $\ast$ & $\ast$ & $\ast$ \\
\cmidrule(lr){2-14}
 & \multirow{6}{*}{4}
  & \multirow{2}{*}{4}
  & 4 & co & 17 & 11. & co & 30 & 114. & co & 27 & 93. & .69 \\
 & & & 8 & tl & 10 & 1840. & rl & $\ast$ & $\ast$ & tl & $\ast$ & $\ast$ & $\ast$ \\
 & & \multirow{1}{*}{8}
  & 4 & co & 18 & 42. & co & 34 & 1494. & co & 29 & 1111. & .58 \\
 & & \multirow{1}{*}{16}
  & 4 & co & 18 & 174. & rl & $\ast$ & $\ast$ & tl & $\ast$ & $\ast$ & $\ast$ \\
 & & \multirow{1}{*}{32}
  & 4 & co & 16 & 580. & sk & $\ast$ & $\ast$ & sk & $\ast$ & $\ast$ & $\ast$ \\
   & & \multirow{1}{*}{64}
  & 4 & tl & 10 & 1853. & sk & $\ast$ & $\ast$ & sk & $\ast$ & $\ast$ & $\ast$ \\
\bottomrule
\end{tabular}
\caption{Solve time in seconds and number of iterations (iter) for instances with $p = 2$.}
\label{tab:envelope}
\end{table}

\begin{table}[!htb]
\sisetup{
table-text-alignment = right,
table-auto-round,
table-figures-integer = 4,
table-figures-decimal = 1,
table-format = 4.1,
add-decimal-zero = false,
add-integer-zero = false,
}
\footnotesize
\begin{tabular}{rrrrrSrrS}
\toprule
 &  & & \multicolumn{3}{l}{$\Ksoslo$} & \multicolumn{3}{l}{$\Ksos$} \\
\cmidrule(lr){4-6} \cmidrule(lr){7-9} 
$n$ & $d$ & $m$ & {st} & {iter} & {time} & {st} & {iter} & {time}\\\midrule
\multirow{10}{*}{1}
 & \multirow{5}{*}{40}
 & 8 & {co} & 17 & .53 & {co} & 15 & .48 \\
 &  & 16 & {co} & 21 & 1.3 & {co} & 15 & 1.9 \\
 &  & 32 & {co} & 25 & 3.2 & {co} & 15 & 11. \\
 &  & 64 & {co} & 29 & 7.6 & {co} & 17 & 87. \\
 &  & 128 & {co} & 32 & 17. & {co} & 18 & 610. \\
\cmidrule(lr){2-9}
 & \multirow{5}{*}{80}
 & 8 & {co} & 21 & 2.6 & {co} & 18 & 2.6 \\
 &  & 16 & {co} & 24 & 5.6 & {co} & 17 & 13. \\
 &  & 32 & {co} & 27 & 13. & {co} & 18 & 89. \\
 &  & 64 & {co} & 31 & 31. & {co} & 18 & 600. \\
 &  & 128 & {co} & 38 & 83. & tl & $\ast$ & $\ast$ \\
\cmidrule(lr){1-9}
\multirow{10}{*}{4}
 & \multirow{5}{*}{2}
 & 8 & {co} & 17 & .49 & {co} & 17 & .37 \\
 &  & 16 & {co} & 18 & 1.0 & {co} & 16 & 1.3 \\
 &  & 32 & {co} & 24 & 2.8 & {co} & 17 & 7.8 \\
 &  & 64 & {co} & 27 & 6.4 & {co} & 17 & 57. \\
 &  & 128 & {co} & 30 & 14. & {co} & 17 & 400. \\
\cmidrule(lr){2-9}
 & \multirow{5}{*}{4}
 & 8 & {co} & 25 & 28. & {co} & 21 & 54. \\
 &  & 16 & {co} & 28 & 86. & {co} & 22 & 318. \\
 &  & 32 & {co} & 29 & 198. & tl & 9 & 1823. \\
 &  & 64 & {co} & 31 & 423. & sk & $\ast$ & $\ast$ \\
&  & 128 & {co} & 42 & 1210. & sk & $\ast$ & $\ast$ \\
\bottomrule
\end{tabular}
\caption{Solve time in seconds and number of iterations (iter) for instances with $p = 1$.}
\label{tab:norml1:master}
\end{table}

\bibliography{main}

\begin{thebibliography}{24}
\providecommand{\natexlab}[1]{#1}
\providecommand{\url}[1]{{#1}}
\providecommand{\urlprefix}{URL }
\expandafter\ifx\csname urlstyle\endcsname\relax
  \providecommand{\doi}[1]{DOI~\discretionary{}{}{}#1}\else
  \providecommand{\doi}{DOI~\discretionary{}{}{}\begingroup
  \urlstyle{rm}\Url}\fi
\providecommand{\eprint}[2][]{\url{#2}}

\bibitem[{Alizadeh and Goldfarb(2003)}]{alizadeh2003second}
Alizadeh F, Goldfarb D (2003) Second-order cone programming. Mathematical
  programming 95(1):3--51

\bibitem[{Aylward et~al.(2007)Aylward, Itani, and
  Parrilo}]{aylward2007explicit}
Aylward EM, Itani SM, Parrilo PA (2007) Explicit {SOS} decompositions of
  univariate polynomial matrices and the {K}alman-{Y}akubovich-{P}opov lemma.
  In: 2007 46th IEEE Conference on Decision and Control, IEEE, pp 5660--5665

\bibitem[{Aylward et~al.(2008)Aylward, Parrilo, and
  Slotine}]{aylward2008stability}
Aylward EM, Parrilo PA, Slotine JJE (2008) Stability and robustness analysis of
  nonlinear systems via contraction metrics and sos programming. Automatica
  44(8):2163--2170

\bibitem[{Bezanson et~al.(2017)Bezanson, Edelman, Karpinski, and
  Shah}]{bezanson2017julia}
Bezanson J, Edelman A, Karpinski S, Shah VB (2017) Julia: {A} fresh approach to
  numerical computing. SIAM review 59(1):65--98

\bibitem[{Blekherman et~al.(2012)Blekherman, Parrilo, and
  Thomas}]{blekherman2012semidefinite}
Blekherman G, Parrilo PA, Thomas RR (2012) Semidefinite optimization and convex
  algebraic geometry. SIAM

\bibitem[{Coey et~al.(2022)Coey, Kapelevich, and Vielma}]{coey2021solving}
Coey C, Kapelevich L, Vielma JP (2022) Solving natural conic formulations with
  hypatia. jl. INFORMS Journal on Computing

\bibitem[{Doherty et~al.(2004)Doherty, Parrilo, and
  Spedalieri}]{doherty2004complete}
Doherty AC, Parrilo PA, Spedalieri FM (2004) Complete family of separability
  criteria. Physical Review A 69(2):022308

\bibitem[{Faybusovich(2002)}]{faybusovich2002self}
Faybusovich L (2002) Self-concordant barriers for cones generated by chebyshev
  systems. SIAM Journal on Optimization 12(3):770--781

\bibitem[{Genin et~al.(2003)Genin, Hachez, Nesterov, and
  Van~Dooren}]{genin2003optimization}
Genin Y, Hachez Y, Nesterov Y, Van~Dooren P (2003) Optimization problems over
  positive pseudopolynomial matrices. SIAM Journal on Matrix Analysis and
  Applications 25(1):57--79

\bibitem[{Hall(2019)}]{hall2019engineering}
Hall G (2019) Engineering and business applications of sum of squares
  polynomials. arXiv preprint arXiv:190607961

\bibitem[{Henrion and Lasserre(2006)}]{henrion2006convergent}
Henrion D, Lasserre JB (2006) Convergent relaxations of polynomial matrix
  inequalities and static output feedback. IEEE Transactions on Automatic
  Control 51(2):192--202

\bibitem[{Hoffman and Withers(1988)}]{hoffman1988generalized}
Hoffman ME, Withers WD (1988) Generalized {C}hebyshev polynomials associated
  with affine {W}eyl groups. Transactions of the American Mathematical Society
  308(1):91--104

\bibitem[{Hol and Scherer(2004)}]{hol2004sum}
Hol CW, Scherer CW (2004) Sum of squares relaxations for polynomial
  semidefinite programming. In: Proc. Symp. on Mathematical Theory of Networks
  and Systems (MTNS), Leuven, Belgium, Citeseer

\bibitem[{Kojima(2003)}]{kojima2003sums}
Kojima M (2003) Sums of squares relaxations of polynomial semidefinite
  programs. Inst. of Technology

\bibitem[{Kojima and Muramatsu(2007)}]{kojima2007extension}
Kojima M, Muramatsu M (2007) An extension of sums of squares relaxations to
  polynomial optimization problems over symmetric cones. Mathematical
  Programming 110(2):315--336

\bibitem[{Legat et~al.(2017)Legat, Coey, Deits, Huchette, and
  Perry}]{legat2017sum}
Legat B, Coey C, Deits R, Huchette J, Perry A (2017) Sum-of-squares
  optimization in {J}ulia

\bibitem[{Lubin and Dunning(2015)}]{LubinDunningIJOC}
Lubin M, Dunning I (2015) Computing in {O}perations {R}esearch using {J}ulia.
  INFORMS Journal on Computing 27(2):238--248, \doi{10.1287/ijoc.2014.0623}

\bibitem[{Nesterov(2000)}]{nesterov2000squared}
Nesterov Y (2000) Squared functional systems and optimization problems. In:
  High performance optimization, Springer, pp 405--440

\bibitem[{Nesterov and Nemirovskii(1994)}]{nesterov1994interior}
Nesterov Y, Nemirovskii A (1994) Interior-point polynomial algorithms in convex
  programming. Studies in Applied Mathematics, SIAM

\bibitem[{Papp and Alizadeh(2013)}]{papp2013semidefinite}
Papp D, Alizadeh F (2013) Semidefinite characterization of sum-of-squares cones
  in algebras. SIAM Journal on Optimization 23(3):1398--1423

\bibitem[{Papp and Yildiz(2019)}]{papp2019sum}
Papp D, Yildiz S (2019) Sum-of-squares optimization without semidefinite
  programming. SIAM Journal on Optimization 29(1):822--851

\bibitem[{Skajaa and Ye(2015)}]{skajaa2015homogeneous}
Skajaa A, Ye Y (2015) A homogeneous interior-point algorithm for nonsymmetric
  convex conic optimization. Mathematical Programming 150(2):391--422

\bibitem[{Sommariva and Vianello(2009)}]{sommariva2009computing}
Sommariva A, Vianello M (2009) Computing approximate {F}ekete points by {QR}
  factorizations of {V}andermonde matrices. Computers \& Mathematics with
  Applications 57(8):1324--1336

\bibitem[{Subramanian and Bhagwat(1979)}]{subramanian1979theorem}
Subramanian R, Bhagwat K (1979) On a theorem of {W}igner on products of
  positive matrices. Proceedings of the Indian Academy of Sciences-Section A
  Part 3, Mathematical Sciences 88(1):31--34

\end{thebibliography}

\end{document}